\documentclass{amsart}

\usepackage{amssymb,amsmath,amsthm,latexsym,amscd}

\theoremstyle{definition}
\newtheorem{definition}{Definition}[section]
\newtheorem{example}[definition]{Example}
\newtheorem{remark}[definition]{Remark}

\theoremstyle{plain}
\newtheorem{lemma}[definition]{Lemma}
\newtheorem{proposition}[definition]{Proposition}
\newtheorem{theorem}[definition]{Theorem}

\newtheorem{conjecture}[definition]{Conjecture}

\newcommand{\nn}{\!\!}

\begin{document}


\title{Jordan Triple Disystems}

\author{Murray R. Bremner}

\address{Department of Mathematics and Statistics, University of Saskatchewan, Canada}

\email{bremner@math.usask.ca}

\author{Ra\'ul Felipe}

\address{CIMAT, Centro de Investigaci\'on en Matem\'aticas, Guanajuato, M\'exico}

\email{raulf@cimat.mx}

\author{Juana S\'anchez-Ortega}

\address{Departmento de \'Algebra, Geometr\'ia y Topolog\'ia, Universidad de M\'alaga, Espa\~na}

\email{jsanchez@agt.cie.uma.es}

\begin{abstract}
We take an algorithmic and computational approach to a basic problem in abstract algebra:
determining the correct generalization to dialgebras of a given variety of nonassociative algebras.
We give a simplified statement of the KP algorithm introduced by Kolesnikov and Pozhidaev 
for extending polynomial identities for algebras to corresponding identities for dialgebras. 
We apply the KP algorithm to the defining identities for Jordan triple systems to obtain a new 
variety of nonassociative triple systems, called Jordan triple disystems.
We give a generalized statement of the BSO algorithm introduced by Bremner and S\'anchez-Ortega for extending 
multilinear operations in an associative algebra to corresponding operations in an associative dialgebra.
We apply the BSO algorithm to the Jordan triple product and use computer algebra to verify that the 
polynomial identities satisfied by the resulting operations coincide with the results of the KP algorithm;
this provides a large class of examples of Jordan triple disystems.
We formulate a general conjecture expressed by a commutative diagram relating the output of the KP and 
BSO algorithms.
We conclude by generalizing the Jordan triple product in a Jordan algebra to operations in a Jordan dialgebra; 
we use computer algebra to verify that resulting structures provide further examples of Jordan triple disystems.  
For this last result, we also provide an independent theoretical proof using Jordan structure theory.
\end{abstract}

\maketitle


\section{Introduction}

The theory of Jordan algebras, originally motivated by potential applications to quantum physics,
was initiated by Jordan, von Neumann and Wigner \cite{Jordan},
and the theory of Jordan triple systems was initiated by Jacobson \cite{JacobsonJTS}.
Standard references on these topics are
Braun and Koecher \cite{Braun},
Jacobson \cite{Jacobson},
McCrimmon \cite{McCrimmon},
Neher \cite{Neher} and Loos \cite{Loos};
for applications to geometry and analysis see Faraut et al. \cite{Farautetal}.
The concept of an associative dialgebra was introduced by Loday 
\cite{LodayDialgebras};
the generalization of the Lie bracket produces Lie dialgebras (also called Leibniz algebras)
which were first introduced by Cuvier \cite{Cuvier} and Loday \cite{LodayLeibniz}.
Numerous authors have considered other varieties of nonassociative dialgebras;
in particular, the generalization of the Jordan product produces 
Jordan dialgebras (also called quasi-Jordan algebras),
which have been studied by
Vel\'asquez and Felipe \cite{VelasquezFelipe1,VelasquezFelipe2},
Kolesnikov \cite{Kolesnikov}, 
Pozhidaev \cite{Pozhidaev1,Pozhidaev2},
Bremner \cite{Bremner}, 
Bremner and Peresi \cite{BremnerPeresi2},
and Voronin \cite{Voronin}.

The purpose of the present paper is to introduce a new variety of triple systems of Jordan type,
which we call Jordan triple disystems.
The relation between Jordan triple disystems and associative dialgebras is analogous to the relation between 
Jordan triple systems and associative algebras. 

Section \ref{preliminaries} recalls basic definitions for associative dialgebras.
Section \ref{KP algorithm} presents a simplified statement of the general Kolesnikov-Pozhidaev (KP) algorithm
for converting an arbitrary variety of multioperator algebras into a variety of dialgebras.
We recall how this algorithm can be applied to the defining identities for Jordan algebras to obtain the
variety of Jordan dialgebras.
In Section \ref{jordantripledisystems} we apply the KP algorithm to the defining identities for 
Jordan triple systems; 
we obtain a system of polynomial identities which define our new variety of Jordan triple disystems.
Section \ref{associativediproducts} presents a generalized statement of the Bremner-S\'anchez-Ortega (BSO)
algorithm for extending multilinear operations in an associative algebra to an associative dialgebra.
We apply the BSO algorithm to the Jordan triple product and use computer algebra to verify that the 
resulting polynomial identities coincide with the results of the KP algorithm; we therefore obtain a large 
class of examples of Jordan triple disystems.
In Section \ref{conjecturesection} we formulate a general conjecture expressed by a commutative diagram 
relating the output of the KP and BSO algorithms.
Section \ref{Jordandialgebrasection} generalizes the Jordan triple product in a Jordan algebra to operations 
in Jordan dialgebras; 
we use computer algebra to verify that resulting structures provide another large class of examples of 
Jordan triple disystems.  
We also provide an independent theoretical proof of this last result using Jordan structure theory.


\section{Dialgebras} \label{preliminaries}

\subsection{Dialgebras and Leibniz algebras}

Dialgebras were introduced by Loday \cite{LodayLeibniz, LodayDialgebras, LodaySurvey} 
to provide a natural setting for Leibniz algebras, a ``noncommutative'' generalization 
of Lie algebras.

\begin{definition} (Cuvier \cite{Cuvier}, Loday \cite{LodayLeibniz})
A {\bf Leibniz algebra} is a vector space $L$ together with a bilinear map $L
\times L \to L$, denoted $(a,b) \mapsto [a,b]$ and called the {\bf Leibniz
bracket}, satisfying the {\bf Leibniz identity}: 
  \[
  [ [ a, b ], c] \equiv [ [ a, c ], b ] + [ a, [ b, c ] ].
  \]
If $[ a, a ] \equiv 0$ then the Leibniz identity is the Jacobi identity and $L$
is a Lie algebra.
\end{definition}

Every associative algebra becomes a Lie algebra if the associative product is
replaced by the Lie bracket. Loday introduced the notion of dialgebra which
gives, by a similar procedure, a Leibniz algebra: one replaces $ab$ and $ba$ by
two distinct operations, so that the resulting bracket is not necessarily
skew-symmetric.

\begin{definition}
An \textbf{associative dialgebra} is a vector space $D$ with two bilinear operations
$\dashv\colon D \times D \to D$ and $\vdash\colon D \times D \to D$,
the \textbf{left} and \textbf{right} products, satisfying the left and right bar identities,
and left, right and inner associativity:
\begin{alignat*}{3}
&
( a \dashv b ) \vdash c
\equiv
( a \vdash b ) \vdash c,
&\quad
&
a \dashv ( b \dashv c )
\equiv
a \dashv ( b \vdash c ),
\\
&
( a \dashv b ) \dashv c
\equiv
a \dashv ( b \dashv c ),
&\quad
&
( a \vdash b ) \vdash c
\equiv
a \vdash ( b \vdash c ),
&\quad
&
( a \vdash b ) \dashv c
\equiv
a \vdash ( b \dashv c ).
\end{alignat*}
From a dialgebra we construct a Leibniz bracket by $a \dashv b - b \vdash a$.
\end{definition}

\subsection{Free dialgebras}

Loday has determined a basis for the free dialgebra.

\begin{definition} 
A \textbf{dialgebra monomial} on  a set $X$ is a product $x = a_1 a_2 \cdots
a_n$ where $a_1, \ldots, a_n \in X$ with some placement of parentheses and some
choice of operations. The {\bf center} of $x$ is defined inductively: if $n =
1$ then $c(x) = x$; if $n \ge 2$ then $x = y \dashv z$ or $x = y \vdash z$ and
we set $c( y \dashv z ) = c(y)$ or $c( y \vdash z ) = c(z)$.
\end{definition}

\begin{lemma} \emph{(Loday \cite{LodaySurvey})}
If $x = a_1 a_2 \cdots a_n$ is a monomial with $c(x) = a_i$ then
$x$ is determined by the order of its factors and the position of its center:
  \[
  x =
  ( a_1 \vdash \cdots \vdash a_{i-1} )
  \vdash a_i \dashv
  ( a_{i+1} \dashv \cdots \dashv a_n ).
  \]
\end{lemma}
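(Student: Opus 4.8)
The plan is to prove the normal form by induction on the number of factors $n$, after first isolating two ``flattening'' facts that record how the five defining identities act on a subword according to its position relative to the center. First I would note that the displayed expression is unambiguous as an element of the free dialgebra: the head $a_1 \vdash \cdots \vdash a_{i-1}$ is well-defined by right associativity of $\vdash$, the tail $a_{i+1} \dashv \cdots \dashv a_n$ by left associativity of $\dashv$, and the junction $(a_1 \vdash \cdots) \vdash a_i \dashv (a_{i+1} \dashv \cdots)$ is bracketing-independent by inner associativity $(a \vdash b) \dashv c \equiv a \vdash (b \dashv c)$. A short computation with the rules $c(y \dashv z) = c(y)$ and $c(y \vdash z) = c(z)$ then confirms that the center of this expression is $a_i$, as required.

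The auxiliary facts are as follows. Fact (A), right-flattening, asserts that for any monomial $w$ with factors $b_1,\dots,b_m$ in order and any element $a$ one has $a \dashv w = a \dashv (b_1 \dashv \cdots \dashv b_m)$: a subword sitting to the right of a $\dashv$ may be replaced by the pure $\dashv$-chain of its letters, regardless of its internal bracketing and operations. Fact (B), left-flattening, is the mirror statement $w \vdash a = (b_1 \vdash \cdots \vdash b_m) \vdash a$. I would prove (A) by induction on the structure of $w$, keeping $a$ universally quantified. If $w = u \dashv v$, left associativity gives $a \dashv (u \dashv v) = (a \dashv u) \dashv v$, and the inductive hypothesis applied first to $u$ and then to $v$ assembles the full chain; if $w = u \vdash v$, the right bar identity $a \dashv (b \vdash c) \equiv a \dashv (b \dashv c)$, used with the monomials $u,v$ in place of $b,c$, replaces the top $\vdash$ by $\dashv$ and reduces to the previous case. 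Fact (B) is proved symmetrically from right associativity and the left bar identity $(a \dashv b) \vdash c \equiv (a \vdash b) \vdash c$.

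With (A) and (B) available, the main induction is brief. For $n = 1$ the statement is trivial. For $n \ge 2$ write $x = y \star z$ with $\star \in \{\dashv, \vdash\}$, so that both $y$ and $z$ are already in canonical form by the inductive hypothesis. If $\star = \dashv$, then $c(x) = c(y) = a_i$; Fact (A) rewrites $z$ as the pure $\dashv$-chain of $a_{k+1},\dots,a_n$, and left associativity merges this onto the $\dashv$-tail of $y$, producing exactly the canonical form with center $a_i$. The case $\star = \vdash$ is dual: here $c(x) = c(z)$, Fact (B) rewrites $y$ as a pure $\vdash$-chain, and right associativity merges it onto the $\vdash$-head of $z$. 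Since the resulting expression depends only on the ordered factors and the center position, the lemma follows.

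I expect the only genuine subtlety to be the legitimacy of applying the bar identities with monomials substituted for the single-letter variables $a,b,c$; this is justified because the free dialgebra is defined modulo these identities, so they hold under arbitrary substitutions. The index bookkeeping in the main induction and the verification that the center lands at $a_i$ are routine once (A) and (B) are in hand, so I regard establishing those two flattening facts as the heart of the argument.
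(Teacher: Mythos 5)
Your proof is correct. Note first that the paper itself offers no proof of this lemma: it is stated with an attribution to Loday's survey and used as a known fact, so there is no in-paper argument to compare yours against. Your structural induction, organized around the two flattening facts (A) and (B), is the standard way to establish the normal form and all the steps check out: the right bar identity $a \dashv (b \vdash c) \equiv a \dashv (b \dashv c)$ together with left associativity of $\dashv$ does let you replace anything to the right of a $\dashv$ by a pure $\dashv$-chain, and dually for (B); the merge in the main induction then needs only left/right/inner associativity, and your observation that the identities hold under substitution of monomials for variables (since the free dialgebra is the quotient by the T-ideal they generate) disposes of the one point that could have been glossed over. You verify that the center of the resulting expression is indeed $a_i$ and that the displayed expression is unambiguous, so the statement that $x$ is determined by the word $a_1\cdots a_n$ and the index $i$ follows. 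The only cosmetic remark is that the base case of the induction for (A) and (B) (when $w$ is a single letter) should be stated explicitly in a written-up version, but that is trivial.
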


\begin{definition}
The right side of the last equation is the {\bf normal form} of $x$ and is
abbreviated by the \textbf{hat notation} $a_1 \cdots a_{i-1} \widehat{a}_i
a_{i+1} \cdots a_n$.
\end{definition}

\begin{lemma} \emph{(Loday \cite{LodaySurvey})}
The set of monomials $a_1 \cdots a_{i-1} \widehat{a}_i a_{i+1} \cdots a_n$ in
normal form with $1 \le i \le n$ and $a_1, \dots, a_n \in X$ forms a basis of
the free dialgebra on $X$.
\end{lemma}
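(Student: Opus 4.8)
The plan is to establish the two halves of the basis property separately: that the normal-form monomials span the free dialgebra, and that they are linearly independent. For a free object the cleanest route to the second half is to construct an explicit model and verify its universal property, which yields linear independence automatically and sidesteps any confluence analysis. The technical heart of both halves is a single combinatorial \emph{multiplication rule}: if $u = a_1 \cdots \widehat{a}_i \cdots a_m$ and $v = b_1 \cdots \widehat{b}_j \cdots b_n$ are in normal form, then in any associative dialgebra
\[
u \dashv v = a_1 \cdots \widehat{a}_i \cdots a_m b_1 \cdots b_n,
\qquad
u \vdash v = a_1 \cdots a_m b_1 \cdots \widehat{b}_j \cdots b_n ;
\]
that is, both products concatenate the factors and retain the center of the left (respectively right) operand, in agreement with the definition of $c(y \dashv z)$ and $c(y \vdash z)$. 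I would prove this rule by induction on $m+n$ using only the five defining identities, and then deploy it twice.

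For spanning, observe that every dialgebra monomial is built from the generators $\widehat{a}$ by iterated left and right products; since each generator is already in normal form, repeated application of the multiplication rule rewrites any monomial as a single normal-form monomial, so the normal-form monomials span. For independence, I would introduce the model $V$ whose basis consists of symbols $(w,p)$, where $w = w_1 \cdots w_k$ is a finite word in $X$ and $1 \le p \le k$ marks a center position, with operations $(w,p) \dashv (w',p') = (ww', p)$ and $(w,p) \vdash (w',p') = (ww', |w| + p')$, where $|w|$ denotes the length of $w$. A direct computation shows these operations satisfy all five dialgebra identities, so $V$ is a dialgebra. To see it is free, given any dialgebra $D$ and any map $f \colon X \to D$ I would define $\bar f(w,p)$ to be the normal-form product $( f(w_1) \vdash \cdots \vdash f(w_{p-1}) ) \vdash f(w_p) \dashv ( f(w_{p+1}) \dashv \cdots \dashv f(w_k) )$ in $D$; identifying each generator with the one-letter word $(a,1)$, the multiplication rule applied inside $D$ shows that $\bar f$ is the unique dialgebra homomorphism extending $f$. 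Hence $V$ satisfies the universal property of the free dialgebra on $X$, and since its defining basis is exactly the set of normal-form monomials, the lemma follows.

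The main obstacle is the inductive proof of the multiplication rule. Collapsing nested products of a single operation is handled by left and right associativity, but the delicate cases are those that must relocate a center across the two operations: the inner-associativity identity $( a \vdash b ) \dashv c \equiv a \vdash ( b \dashv c )$ together with the left and right bar identities are precisely what convert a $\dashv$ adjacent to the center into a $\vdash$ and conversely, forcing the center into the position prescribed by the rule. Tracking the center position through these rewrites, rather than performing the rewrites themselves, is where the care is required; everything else, including the verification of the five identities for $V$ and the uniqueness of $\bar f$, is routine.
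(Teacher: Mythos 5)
The paper does not prove this lemma at all --- it is quoted from Loday \cite{LodaySurvey} with a citation and no argument --- so there is no internal proof to compare against. Your proposal is correct and is essentially Loday's own construction: the model $V$ of words with a marked center is his realization of the free dialgebra (equivalently $T(V)\otimes V\otimes T(V)$ with the pointed tensor as center), and the two halves of your argument fit together exactly as they should. The multiplication rule is the right technical lemma: spanning follows from it by structural induction on monomials, and linear independence follows because the evident homomorphism from the free dialgebra to $V$ sends the normal-form monomial $a_1\cdots\widehat{a}_i\cdots a_n$ to the basis element $(a_1\cdots a_n,\,i)$, these images being distinct. I checked that your operations on $V$ do satisfy all five defining identities (the only mildly interesting one is the left bar identity, where both sides produce center position $|w_a w_b|+p_c$), and the delicate cases you flag in the induction --- relocating the center across a change of operation via the bar and inner-associativity identities --- are indeed exactly where the work lies. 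No gaps.
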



\section{The Kolesnikov-Pozhidaev algorithm} \label{KP algorithm}

This algorithm, introduced by Kolesnikov \cite{Kolesnikov} and Pozhidaev \cite{Pozhidaev2},
converts a multilinear polynomial identity of degree $d$ for an $n$-ary operation 
into $d$ multilinear identities of degree $d$ for $n$ new $n$-ary operations. 

\begin{definition}

\textbf{KP Algorithm.}

Part 1:
We consider a multilinear $n$-ary operation, denoted by the symbol
  \begin{equation} \label{operation}
  \{-,-,\dots,-\} 
  \qquad
  \text{($n$ arguments)}.
  \end{equation}
Given a multilinear polynomial identity of degree $d$ in this operation, 
we describe the application of the algorithm to one monomial in the identity, 
and from this the application to the complete identity follows by linearity. 
Let $\overline{a_1 a_2 \dots a_d}$
be a multilinear monomial of degree $d$, where the bar denotes some placement of $n$-ary operation symbols. 
We introduce $n$ new $n$-ary operations, denoted by the same symbol but distinguished by subscripts:
  \begin{equation} \label{noperations}
  \{-,-,\dots,-\}_1,
  \quad
  \{-,-,\dots,-\}_2,
  \quad
  \dots,
  \quad
  \{-,-,\dots,-\}_n.
  \end{equation}
For each $i \in \{1, 2, \dots, d\}$ we convert the monomial $\overline{a_1 a_2 \dots a_d}$
in the original $n$-ary operation \eqref{operation} 
into a new monomial of the same degree $d$ in the $n$ new $n$-ary operations \eqref{noperations},
according to the following rule which is based on the position of $a_i$. 
For each occurrence of the original operation symbol in the monomial, 
either $a_i$
occurs within one of the $n$ arguments or not, and we have the following 
cases:
  \begin{itemize}
  \item
  If $a_i$ occurs within the $j$-th argument then we convert the original 
  operation symbol
  $\{\dots\}$ to the $j$-th new operation symbol $\{\dots\}_j$.
  \item
  If $a_i$ does not occur within any of the $n$ arguments, then either
    \begin{itemize}
    \item
    $a_i$ occurs to the left of the original operation symbol, in which case 
    we convert $\{\dots\}$ to the first new operation symbol $\{\dots\}_1$, or
    \item
    $a_i$ occurs to the right of the original operation symbol,
    in which case we convert $\{\dots\}$ to the last new operation 
    symbol $\{\dots\}_n$.
    \end{itemize}
  \end{itemize}
In this process, we call $a_i$ the central argument of the monomial.

Part 2:
In addition to the identities constructed in Part 1, we also include the following identities 
for all $i, j \in \{ 1, 2, \dots, n\}$ with $i \ne j$ 
and all $k, \ell \in \{ 1, 2, \dots, n\}$: 
  \begin{align*}  
  &
  \{ a_1, \dots, a_{i-1}, \{ b_1, \cdots, b_n \}_k, a_{i+1}, \dots, a_n \}_j
  \equiv
  \\ 
  &
  \{ a_1, \dots, a_{i-1}, \{ b_1, \cdots, b_n \}_\ell, a_{i+1}, \dots, a_n \}_j.
  \end{align*} 
This identity says that the $n$ new operations are interchangeable in 
the $i$-th argument of the $j$-th new operation when $i \ne j$.
\end{definition}

\begin{example}
The defining identities for associative dialgebras can be obtained by applying
the KP algorithm to the associativity identity, which we write in the form
$\{ \{ a, b \}, c \} \equiv \{ a, \{ b, c \} \}$.
The original operation produces two new operations $\{-,-\}_1$ and $\{-,-\}_2$.
Since associativity has degree 3, Part 1 produces three new identities of degree 3
by making $a$, $b$, $c$ in turn the central argument:
  \[
  \{ \{ a, b \}_1, c \}_1 {\equiv} \{ a, \{ b, c \}_1 \}_1, \,
  \{ \{ a, b \}_2, c \}_1 {\equiv} \{ a, \{ b, c \}_1 \}_2, \,
  \{ \{ a, b \}_2, c \}_2 {\equiv} \{ a, \{ b, c \}_2 \}_2,
  \] 
and Part 2 produces these two identities:
  \[
  \{ a, \{ b, c \}_1 \}_1 \equiv \{ a, \{ b, c \}_2 \}_1, 
  \quad
  \{ \{ a, b \}_1, c \}_2 \equiv \{ \{ a, b \}_2, c \}_2.
  \] 
If we revert to the standard notation by writing 
$a \dashv b = \{ a, b \}_1$ and $a \vdash b = \{ a, b \}_2$,
then these five identities are the defining identities for associative 
dialgebras.
\end{example}

\begin{definition}
A (linear) \textbf{Jordan algebra} is a vector space $J$ over a field of
characteristic $\ne 2$ with a bilinear operation $J \times J \to J$, denoted $a \circ b$,
satisfying commutativity and the {\bf Jordan identity} for all $a, b \in J$:
  \[
  a \circ b \equiv b \circ a,
  \qquad
  ( ( a \circ a ) \circ b ) \circ a \equiv ( a \circ a ) \circ ( b \circ a ).
  \]
\end{definition}

\begin{example} \label{jordanalgebraexample}
To apply the KP algorithm to Jordan algebras, we must start with multilinear identities,
so we linearize the Jordan identity to obtain
\begin{align*}
&
( ( a \circ c ) \circ b ) \circ d +
( ( a \circ d ) \circ b ) \circ c +
( ( c \circ d ) \circ b ) \circ a
\equiv
\\
&
( a \circ c ) \circ ( b \circ d ) +
( a \circ d ) \circ ( b \circ c ) +
( c \circ d ) \circ ( b \circ a ).
\end{align*}
(For a general discussion of linearization, see Zhevlakov et 
al.~\cite{Zhevlakov}, Chapter 1.)
We rewrite commutativity and the linearized Jordan identity using the symbol $\{-,-\}$:
\begin{align*}
&
\{ a, b \} - \{ b, a \} \equiv 0,
\\
&
\{ \{ \{ a, c \}, b \}, d \}
+
\{ \{ \{ a, d \}, b \}, c \}
+
\{ \{ \{ c, d \}, b \}, a \}
\\
&  \quad
-
\{ \{ a, c \}, \{ b, d \} \}
-
\{ \{ a, d \}, \{ b, c \} \}
-
\{ \{ c, d \}, \{ b, a \} \}
\equiv 0.
\end{align*}
The KP algorithm tells us to introduce two new operations $\{-,-\}_1$ 
and $\{-,-\}_2$.

Part 1:
Since commutativity has degree 2, we obtain two identities of degree 2
relating the two new operations:
\[
\{ a, b \}_1 - \{ b, a \}_2 \equiv 0,
\qquad
\{ a, b \}_2 - \{ b, a \}_1 \equiv 0,
\]
These identities are both equivalent to $\{ a, b \}_2 \equiv \{ b, a \}_1$:
the second operation is the opposite of the first.
Hence we can replace every occurrence of $\{-,-\}_2$ by an occurrence of $\{-,-\}_1$.
Since the linearized Jordan identity has degree 4, we obtain four identities of degree 4
relating the two new operations:
\allowdisplaybreaks
\begin{align}
&
\{ \{ \{ a, c \}_1, b \}_1, d \}_1
+
\{ \{ \{ a, d \}_1, b \}_1, c \}_1
+
\{ \{ \{ c, d \}_2, b \}_2, a \}_2
\notag
\\
&\quad
-
\{ \{ a, c \}_1, \{ b, d \}_1 \}_1
-
\{ \{ a, d \}_1, \{ b, c \}_1 \}_1
-
\{ \{ c, d \}_2, \{ b, a \}_2 \}_2
\equiv 0,
\label{qj1}
\\
&
\{ \{ \{ a, c \}_2, b \}_2, d \}_1
+
\{ \{ \{ a, d \}_2, b \}_2, c \}_1
+
\{ \{ \{ c, d \}_2, b \}_2, a \}_1
\notag
\\
&\quad
-
\{ \{ a, c \}_2, \{ b, d \}_1 \}_2
-
\{ \{ a, d \}_2, \{ b, c \}_1 \}_2
-
\{ \{ c, d \}_2, \{ b, a \}_1 \}_2
\equiv 0,
\label{qj2}
\\
&
\{ \{ \{ a, c \}_2, b \}_1, d \}_1
+
\{ \{ \{ a, d \}_2, b \}_2, c \}_2
+
\{ \{ \{ c, d \}_1, b \}_1, a \}_1
\notag
\\
&\quad
-
\{ \{ a, c \}_2, \{ b, d \}_1 \}_1
-
\{ \{ a, d \}_2, \{ b, c \}_2 \}_2
-
\{ \{ c, d \}_1, \{ b, a \}_1 \}_1
\equiv 0,
\label{qj3}
\\
&
\{ \{ \{ a, c \}_2, b \}_2, d \}_2
+
\{ \{ \{ a, d \}_2, b \}_1, c \}_1
+
\{ \{ \{ c, d \}_2, b \}_1, a \}_1
\notag
\\
&\quad
-
\{ \{ a, c \}_2, \{ b, d \}_2 \}_2
-
\{ \{ a, d \}_2, \{ b, c \}_1 \}_1
-
\{ \{ c, d \}_2, \{ b, a \}_1 \}_1
\equiv 0.
\label{qj4}
\end{align}
In identities \eqref{qj1}--\eqref{qj4}, we replace every instance of 
the second operation by the opposite of the first operation:
\allowdisplaybreaks
\begin{align}
&
\{ \{ \{ a, c \}_1, b \}_1, d \}_1
+
\{ \{ \{ a, d \}_1, b \}_1, c \}_1
+
\{ a, \{ b, \{ d, c \}_1 \}_1 \}_1
\notag
\\
&\quad
-
\{ \{ a, c \}_1, \{ b, d \}_1 \}_1
-
\{ \{ a, d \}_1, \{ b, c \}_1 \}_1
-
\{ \{ a, b \}_1, \{ d, c \}_1 \}_1
\equiv 0,
\label{qj1'}
\\
&
\{ \{ b, \{ c, a \}_1 \}_1, d \}_1
+
\{ \{ b, \{ d, a \}_1 \}_1, c \}_1
+
\{ \{ b, \{ d, c \}_1 \}_1, a \}_1
\notag
\\
&\quad
-
\{ \{ b, d \}_1, \{ c, a \}_1 \}_1
-
\{ \{ b, c \}_1, \{ d, a \}_1 \}_1
-
\{ \{ b, a \}_1, \{ d, c \}_1 \}_1
\equiv 0,
\label{qj2'}
\\
&
\{ \{ \{ c, a \}_1, b \}_1, d \}_1
+
\{ c, \{ b, \{ d, a \}_1 \}_1 \}_1
+
\{ \{ \{ c, d \}_1, b \}_1, a \}_1
\notag
\\
&\quad
-
\{ \{ c, a \}_1, \{ b, d \}_1 \}_1
-
\{ \{ c, b \}_1, \{ d, a \}_1 \}_1
-
\{ \{ c, d \}_1, \{ b, a \}_1 \}_1
\equiv 0,
\label{qj3'}
\\
&
\{ d, \{ b, \{ c, a \}_1 \}_1 \}_1
+
\{ \{ \{ d, a \}_1, b \}_1, c \}_1
+
\{ \{ \{ d, c \}_1, b \}_1, a \}_1
\notag
\\
&\quad
-
\{ \{ d, b \}_1, \{ c, a \}_1 \}_1
-
\{ \{ d, a \}_1, \{ b, c \}_1 \}_1
-
\{ \{ d, c \}_1, \{ b, a \}_1 \}_1
\equiv 0.
\label{qj4'}
\end{align}
Since we now have only one operation, we revert to a simpler notation, 
and write $\{a,b\}_1$ simply as $ab$.
Identities \eqref{qj1'}--\eqref{qj4'} take the following form:
\allowdisplaybreaks
\begin{align}
&
( ( a c ) b ) d + ( ( a d ) b ) c + a ( b ( d c ) ) - ( a c ) ( b d ) - 
( a d ) ( b c ) - ( a b ) ( d c ) \equiv 0,
\label{qj1''}
\\
&
( b ( c a ) ) d
+
( b ( d a ) ) c
+
( b ( d c ) ) a
-
( b d ) ( c a )
-
( b c ) ( d a )
-
( b a ) ( d c )
\equiv 0,
\label{qj2''}
\\
&
( ( c a ) b ) d
+
c ( b ( d a ) )
+
( ( c d ) b ) a
-
( c a ) ( b d )
-
( c b ) ( d a )
-
( c d ) ( b a )
\equiv 0,
\label{qj3''}
\\
&
d ( b ( c a ) )
+
( ( d a ) b ) c
+
( ( d c ) b ) a
-
( d b ) ( c a )
-
( d a ) ( b c )
-
( d c ) ( b a )
\equiv 0.
\label{qj4''}
\end{align}
Clearly \eqref{qj1''} becomes \eqref{qj3''} after the transposition $ac$,
and \eqref{qj1''} becomes \eqref{qj4''} after the cyclic permutation $adc$.
We discard \eqref{qj3''} and \eqref{qj4''} and retain \eqref{qj1''} and \eqref{qj2''}.

Part 2:
We include identities which state that in the first (resp.~second) argument 
of the second (resp.~first) new operation,
the two operations are interchangeable:
\[
\{ a, \{ b, c \}_1 \}_1 \equiv \{ a, \{ b, c \}_2 \}_1,
\qquad
\{ \{ a, b \}_1, c \}_2 \equiv \{ \{ a, b \}_2, c \}_2.
\]
Rewriting these in terms of the first operation gives
\[
\{ a, \{ b, c \}_1 \}_1 \equiv \{ a, \{ c, b \}_1 \}_1,
\qquad
\{ c, \{ a, b \}_1 \}_1 \equiv \{ c, \{ b, a \}_1 \}_1.
\]
These identities are both equivalent to right-commutativity $a(bc) \equiv a(cb)$.

Rearranging the terms in \eqref{qj1''} and applying right-commutativity gives
\[
( ( a c ) b ) d
-
( a c ) ( b d )
+
( ( a d ) b ) c
-
( a d ) ( b c )
-
( a b ) ( c d )
+
a ( b ( c d ) )
\equiv 0.
\]
This can be reformulated in terms of associators as follows:
\begin{equation}
( a c, b, d )
+
( a d, b, c )
-
( a, b, c d )
\equiv 0.
\label{qj1'''}
\end{equation}
If we assume characteristic not $2$, then identity \eqref{qj1'''} is 
equivalent to 
\begin{equation}
( a, b, c^2 ) \equiv 2 ( a c, b, c ).
\label{qj1''''}
\end{equation}
Apply right-commutativity to \eqref{qj2''} gives
\[
( b ( a c ) ) d
+
( b ( a d ) ) c
+
( b ( c d ) ) a
-
( b d ) ( a c )
-
( b c ) ( a d )
-
( b a ) ( c d )
\equiv 0.
\]
Setting $a = c = d$ (and dividing by 3) gives
\begin{equation}
( b a^2 ) a \equiv ( b a ) a^2.
\label{qj2''''}
\end{equation}
If we assume characteristic not $3$, then identities \eqref{qj2''} and \eqref{qj2''''} are equivalent.
\end{example}

\begin{definition} \label{JDdefinition}
Over a field of characteristic not $2, 3$, a \textbf{(right) Jordan dialgebra} 
is a vector space $D$ with a bilinear operation $D \times D \to D$, denoted $ab$, 
satisfying \textbf{right commutativity}, the \textbf{Osborn identity}, 
and the \textbf{right Jordan identity}:
  \[
  a(bc) \equiv a(cb),
  \qquad
  ( a, b, c^2 ) \equiv 2 ( a c, b, c ),
  \qquad
  ( b a^2 ) a \equiv ( b a ) a^2.
  \]
\end{definition}

\begin{remark}
The second identity in Definition \ref{JDdefinition} first appeared in Osborn \cite{Osborn}; 
see also Petersson \cite{Petersson}.
In a Jordan dialgebra, the Osborn identity is equivalent to
  \begin{equation} \label{osborn2}
  ( b, a^2, c ) = 2 ( b, a, c ) a.
  \end{equation}
The linearized forms of the Osborn identity, the right Jordan identity and \eqref{osborn2}
are
  \begin{align*}
  O_1(a,b,c,d) &= ((ba)c)d + ((bd)c)a - (b(ac))d - (b(cd))a - (b(ad))c + b((ad)c),
  \\
  RJ(a,b,c,d) &= (b(ac))d + (b(ad))c + (b(cd))a - (bd)(ac) - (bc)(ad) - (ba)(cd),
  \\
  O_2(a,b,c,d) &= ((ac)b)d + ((ad)b)c - (ab)(cd) - (ac)(bd) - (ad)(bc) + a((cd)b).
  \end{align*}
Using right commutativity it is easy to verify that
  \[
  O_2(a,b,c,d) = O_1(c,a,b,d) + RJ(b,a,c,d).
  \]
\end{remark}

\begin{remark}
The notion of a Jordan dialgebra was discovered independently by various authors during 
the last few years.
Kolesnikov \cite{Kolesnikov} introduced a functorial approach to varieties of associative 
and nonassociative dialgebras and obtained the defining identities for left Jordan dialgebras 
(the opposite identities to those of Definition \ref{JDdefinition}).
Vel\'asquez and Felipe \cite{VelasquezFelipe1} introduced the product $a \dashv b + b \vdash a$
in an associative dialgebra, showed that it satisfies the first and third identities of Definition 
\ref{JDdefinition}, and defined a quasi-Jordan algebra to be a vector space satisfying these
identities.
Bremner \cite{Bremner} used computer algebra to verify that $a \dashv b + b \vdash a$ also 
satisfies the second identity of Definition \ref{JDdefinition}.
\end{remark}


\section{Jordan triple disystems} \label{jordantripledisystems}

In this section we apply the KP algorithm to the defining identities of Jordan triple 
systems. We obtain a new variety of triple systems, which we call Jordan triple disystems,
with two trilinear operations. 

\begin{definition} \label{defjts}
A (linear) \textbf{Jordan triple system} (JTS) is a vector space $T$ over a field of characteristic not $2$ 
with a trilinear operation $T \times T \times T \to T$, denoted $\{-,-,-\}$,
satisfying these polynomial identities:
\begin{align*}
&
\{a,b,c\}
\equiv
\{c,b,a\},
\\
&
\{a,b,\{c,d,e\}\}
\equiv
\{\{a,b,c\},d,e\} - \{c,\{b,a,d\},e\} + \{c,d,\{a,b,e\}\}.
\end{align*}
\end{definition}

\begin{theorem} \label{KPidentities}
Applying the KP algorithm to Definition \ref{defjts} produces
two trilinear operations $\{-,-,-\}_1$ and $\{-,-,-\}_2$ satisfying these identities:
\begin{align*}
&
\{a,b,c\}_2 \equiv \{c,b,a\}_2,
\\
&
\{ a, \{ b, c, d \}_1, e \}_1 \equiv
\{ a, \{ b, c, d \}_2, e \}_1 \equiv
\{ a, \{ d, c, b \}_1, e \}_1,
\\
&
\{ a, b, \{ c, d, e \}_1 \}_1 \equiv
\{ a, b, \{ c, d, e \}_2 \}_1 \equiv
\{ a, b, \{ e, d, c \}_1 \}_1,
\\
&
\{ \{ a, b, c \}_1, d, e \}_2 \equiv
\{ \{ a, b, c \}_2, d, e \}_2 \equiv
\{ \{ c, b, a \}_1, d, e \}_2,
\\
&
\{ \{ e, d, c \}_1, b, a \}_1
\equiv
\{ \{ e, b, a \}_1, d, c \}_1
-
\{ e, \{ d, a, b \}_1, c \}_1
+
\{ e, d, \{ c, b, a \}_1 \}_1,
\\
&
\{ \{ e, d, c \}_2, b, a \}_1
\equiv
\{ \{ e, b, a \}_1, d, c \}_2
-
\{ e, \{ d, a, b \}_1, c \}_2
+
\{ e, d, \{ c, b, a \}_1 \}_2,
\\
&
\{ a, b, \{ c, d, e \}_1 \}_1
\equiv
\{ \{ a, b, c \}_1, d, e \}_1
-
\{ c, \{ b, a, d \}_2, e \}_2
+
\{ \{ a, b, e \}_1, d, c \}_1,
\\
&
\{ a, b, \{ c, d, e \}_1 \}_2
\equiv
\{ \{ a, b, c \}_2, d, e \}_1
-
\{ c, \{ b, a, d \}_1, e \}_2
+
\{ \{ a, b, e \}_2, d, c \}_1.
\end{align*}
\end{theorem}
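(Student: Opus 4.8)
The plan is to run the KP algorithm of Section~\ref{KP algorithm} directly on the two defining identities of Definition~\ref{defjts} and then to normalize the output so that it involves only two of the three new operations. Since the Jordan triple product is ternary we have $n=3$, so Part~1 introduces three new trilinear operations $\{-,-,-\}_1$, $\{-,-,-\}_2$, $\{-,-,-\}_3$; the whole task is to show that the symmetry identity collapses the third operation onto the first, exactly as commutativity collapsed the second operation onto the first in Example~\ref{jordanalgebraexample}, and then to read off the remaining identities from the five-linear identity.

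First I would apply Part~1 to the symmetry identity $\{a,b,c\}-\{c,b,a\}\equiv 0$, which has degree $3$. Taking each of $a,b,c$ in turn as the central argument and recording, for the single operation symbol in each monomial, which argument the central variable occupies, produces three identities. The central choice $b$ yields $\{a,b,c\}_2\equiv\{c,b,a\}_2$, the first listed identity, while the central choices $a$ and $c$ both yield the relation $\{x,y,z\}_3\equiv\{z,y,x\}_1$. This says that the third operation is the \emph{reversal} of the first, and I would use it henceforth to eliminate every occurrence of $\{-,-,-\}_3$, keeping in mind that substituting it into a subscript-$3$ operation symbol interchanges the first and third arguments carried by that symbol.

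Next I would write out the Part~2 interchangeability identities for $n=3$: for each target operation $j$ and each nondiagonal slot $i\neq j$ the inner subscript may be changed freely. After the substitution $\{x,y,z\}_3\equiv\{z,y,x\}_1$, the cases $(j,i)=(1,2)$ and $(1,3)$ produce the second and third listed identities and $(j,i)=(2,1)$ produces the fourth; the case $(2,3)$ and the $j=3$ cases give further reversal relations that I would keep on hand to simplify the Part~1 output. Finally I would apply Part~1 to the linearized five-linear identity, which has degree $5$. For each central variable I must determine, in each of the four monomials and for \emph{both} the outer and the inner operation symbol, the subscript dictated by the rule, including the left/right convention when the central variable lies outside all arguments of the inner operation. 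Substituting $\{x,y,z\}_3\equiv\{z,y,x\}_1$ into the five resulting identities, the central choices $a$, $b$, $d$, $e$ give respectively the seventh, eighth, sixth and fifth listed identities, while the central choice $c$ coincides with the fifth identity after relabeling $c\leftrightarrow e$ and is therefore discarded.

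The bookkeeping just described is the main obstacle: for the degree-$5$ identity there are $5$ central choices, $4$ monomials and $2$ nested operation symbols, giving $40$ separate subscript determinations, and the left/right rule for the inner operation must be applied correctly in each case and then composed with the reversal substitution. A subtler point is the elimination of the redundant central-$c$ identity: after relabeling, its two discrepancies with the fifth identity are exactly a reversal of an inner operation in the middle argument and in the last argument of $\{-,-,-\}_1$, and these are absorbed precisely by the second and third listed identities from Part~2. Thus the redundancy is invisible to Part~1 alone and becomes apparent only once the Part~2 interchangeability identities are available, so the two parts of the algorithm must be combined before the output can be pruned to the eight identities in the statement.
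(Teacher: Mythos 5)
Your proposal follows essentially the same route as the paper's proof: Part~1 on the symmetry identity yields $\{a,b,c\}_2\equiv\{c,b,a\}_2$ and the reversal relation $\{x,y,z\}_3\equiv\{z,y,x\}_1$ used to eliminate the third operation, Part~2 supplies the interchangeability identities in lines 2--4, and the five central choices for the linearized identity map to the listed identities exactly as you describe (with the central-$c$ case reducing to the central-$e$ case via the transposition $c\leftrightarrow e$ together with the middle- and last-argument reversal identities). Your observation that the redundancy of the central-$c$ identity is only visible after combining with Part~2 is precisely how the paper handles it, so the proposal is correct.
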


\begin{proof}
Part 1:
First, we consider the identity of degree 3: $\{a,b,c\} - \{c,b,a\} \equiv 0$.
If we make $a$, $b$, $c$ in turn the central argument we obtain three identities:
\[
\{a,b,c\}_1 - \{c,b,a\}_3 \equiv 0,
\quad
\{a,b,c\}_2 - \{c,b,a\}_2 \equiv 0,
\quad
\{a,b,c\}_3 - \{c,b,a\}_1 \equiv 0.
\]
The first and third identities are both equivalent to $\{a,b,c\}_3 \equiv \{c,b,a\}_1$:
the third operation is the opposite of the first, and can be eliminated.
The second identity says that the second operation is symmetric in its 
first and third arguments: 
  \begin{equation} \label{op2symmetry}
  \{a,b,c\}_2 \equiv \{c,b,a\}_2.
  \end{equation}
Second, we consider the identity of degree 5,
\[
\{a,b,\{c,d,e\}\} - \{\{a,b,c\},d,e\} + \{c,\{b,a,d\},e\} - 
\{c,d,\{a,b,e\}\}
\equiv
0.
\]
If we make $a$, $b$, $c$, $d$, $e$ in turn the central argument we obtain five identities;
\allowdisplaybreaks
\begin{align*}
&
\{ a, b, \{ c, d, e \}_1 \}_1 -
\{ \{ a, b, c \}_1, d, e \}_1 +
\{ c, \{ b, a, d \}_2, e \}_2 -
\{ c, d, \{ a, b, e \}_1 \}_3
\equiv 0,
\\
&
\{ a, b, \{ c, d, e \}_1 \}_2 -
\{ \{ a, b, c \}_2, d, e \}_1 +
\{ c, \{ b, a, d \}_1, e \}_2 -
\{ c, d, \{ a, b, e \}_2 \}_3
\equiv 0,
\\
&
\{ a, b, \{ c, d, e \}_1 \}_3 -
\{ \{ a, b, c \}_3, d, e \}_1 +
\{ c, \{ b, a, d \}_1, e \}_1 -
\{ c, d, \{ a, b, e \}_1 \}_1
\equiv 0,
\\
&
\{ a, b, \{ c, d, e \}_2 \}_3 -
\{ \{ a, b, c \}_3, d, e \}_2 +
\{ c, \{ b, a, d \}_3, e \}_2 -
\{ c, d, \{ a, b, e \}_1 \}_2
\equiv 0,
\\
&
\{ a, b, \{ c, d, e \}_3 \}_3 -
\{ \{ a, b, c \}_3, d, e \}_3 +
\{ c, \{ b, a, d \}_3, e \}_3 -
\{ c, d, \{ a, b, e \}_3 \}_3
\equiv 0.
\end{align*}
We replace $\{a,b,c\}_3$ by the opposite of $\{a,b,c\}_1$; to save space we omit ``$\equiv 0$'':
\allowdisplaybreaks
\begin{align}
&
\{ a, b, \{ c, d, e \}_1 \}_1 -
\{ \{ a, b, c \}_1, d, e \}_1 +
\{ c, \{ b, a, d \}_2, e \}_2 -
\{ \{ a, b, e \}_1, d, c \}_1,
\label{id5-1}
\\
&
\{ a, b, \{ c, d, e \}_1 \}_2 -
\{ \{ a, b, c \}_2, d, e \}_1 +
\{ c, \{ b, a, d \}_1, e \}_2 -
\{ \{ a, b, e \}_2, d, c \}_1,
\label{id5-2}
\\
&
\{ \{ c, d, e \}_1, b, a \}_1 -
\{ \{ c, b, a \}_1, d, e \}_1 +
\{ c, \{ b, a, d \}_1, e \}_1 -
\{ c, d, \{ a, b, e \}_1 \}_1,
\label{id5-3}
\\
&
\{ \{ c, d, e \}_2, b, a \}_1 -
\{ \{ c, b, a \}_1, d, e \}_2 +
\{ c, \{ d, a, b \}_1, e \}_2 -
\{ c, d, \{ a, b, e \}_1 \}_2,
\label{id5-4}
\\
&
\{ \{ e, d, c \}_1, b, a \}_1 -
\{ e, d, \{ c, b, a \}_1 \}_1 +
\{ e, \{ d, a, b \}_1, c \}_1 -
\{ \{ e, b, a \}_1, d, c \}_1.
\label{id5-5}
\end{align}

Part 2:
We obtain the following 12 identities:
\allowdisplaybreaks
\begin{align*}
&
\{ a, \{ b, c, d \}_1, e \}_1 \equiv
\{ a, \{ b, c, d \}_2, e \}_1 \equiv
\{ a, \{ b, c, d \}_3, e \}_1,
\\
&
\{ a, b, \{ c, d, e \}_1 \}_1 \equiv
\{ a, b, \{ c, d, e \}_2 \}_1 \equiv
\{ a, b, \{ c, d, e \}_3 \}_1,
\\
&
\{ \{ a, b, c \}_1, d, e \}_2 \equiv
\{ \{ a, b, c \}_2, d, e \}_2 \equiv
\{ \{ a, b, c \}_3, d, e \}_2,
\\
&
\{ a, b, \{ c, d, e \}_1 \}_2 \equiv
\{ a, b, \{ c, d, e \}_2 \}_2 \equiv
\{ a, b, \{ c, d, e \}_3 \}_2,
\\
&
\{ \{ a, b, c \}_1, d, e \}_3 \equiv
\{ \{ a, b, c \}_2, d, e \}_3 \equiv
\{ \{ a, b, c \}_3, d, e \}_3,
\\
&
\{ a, \{ b, c, d \}_1, e \}_3 \equiv
\{ a, \{ b, c, d \}_2, e \}_3 \equiv
\{ a, \{ b, c, d \}_3, e \}_3.
\end{align*}
We replace $\{a,b,c\}_3$ by the opposite of $\{a,b,c\}_1$, obtaining
\allowdisplaybreaks
\begin{align}
&
\{ a, \{ b, c, d \}_1, e \}_1 \equiv
\{ a, \{ b, c, d \}_2, e \}_1 \equiv
\{ a, \{ d, c, b \}_1, e \}_1,
\label{bar12}
\\
&
\{ a, b, \{ c, d, e \}_1 \}_1 \equiv
\{ a, b, \{ c, d, e \}_2 \}_1 \equiv
\{ a, b, \{ e, d, c \}_1 \}_1,
\label{bar13}
\\
&
\{ \{ a, b, c \}_1, d, e \}_2 \equiv
\{ \{ a, b, c \}_2, d, e \}_2 \equiv
\{ \{ c, b, a \}_1, d, e \}_2,
\label{bar21}
\\
&
\{ a, b, \{ c, d, e \}_1 \}_2 \equiv
\{ a, b, \{ c, d, e \}_2 \}_2 \equiv
\{ a, b, \{ e, d, c \}_1 \}_2,
\label{bar23}
\end{align}
and other equivalent identities;
note that \eqref{bar23} follows from \eqref{bar21} by using \eqref{op2symmetry}.

We now see that \eqref{id5-3} becomes \eqref{id5-5} by the transposition $ce$
and using \eqref{bar12} and \eqref{bar13}.
We retain \eqref{id5-5}, which we write as a derivation property:
\begin{equation}
\{ \{ e, d, c \}_1, b, a \}_1
\equiv
\{ \{ e, b, a \}_1, d, c \}_1
-
\{ e, \{ d, a, b \}_1, c \}_1
+
\{ e, d, \{ c, b, a \}_1 \}_1.
\end{equation}
We also see that \eqref{id5-4} becomes another derivation property by using \eqref{bar23}:
\begin{equation}
\{ \{ c, d, e \}_2, b, a \}_1
\equiv
\{ \{ c, b, a \}_1, d, e \}_2
-
\{ c, \{ d, a, b \}_1, e \}_2
+
\{ c, d, \{ e, b, a \}_1 \}_2.
\end{equation}
This completes the proof.
\end{proof}

\begin{definition}
A \textbf{Jordan triple disystem} (JTD) is a vector space $D$ over a field of characteristic not $2$ 
with two trilinear operations 
$\{-,-,-\}_i \colon D \times D \times D \rightarrow D$ ($i = 1,2$)
satisfying the following identities:
\begin{align}
&
\{a,b,c\}_2 \equiv \{c,b,a\}_2,
\label{JTD1} \tag{J1}
\\
&
\{ a, \{ b, c, d \}_1, e \}_1 \equiv
\{ a, \{ b, c, d \}_2, e \}_1,
\label{JTD2} \tag{J2}
\\
&
\{ a, b, \{ c, d, e \}_1 \}_1 \equiv
\{ a, b, \{ c, d, e \}_2 \}_1,
\label{JTD3} \tag{J3}
\\
&
\{ \{ a, b, c \}_1, d, e \}_2 \equiv
\{ \{ a, b, c \}_2, d, e \}_2,
\label{JTD4} \tag{J4}
\\
&
\{ \{ e, d, c \}_1, b, a \}_1
\equiv
\{ \{ e, b, a \}_1, d, c \}_1
-
\{ e, \{ d, a, b \}_1, c \}_1
+
\{ e, d, \{ c, b, a \}_1 \}_1,
\label{JTD5} \tag{J5}
\\
&
\{ \{ e, d, c \}_2, b, a \}_1
\equiv
\{ \{ e, b, a \}_1, d, c \}_2
-
\{ e, \{ d, a, b \}_1, c \}_2
+
\{ e, d, \{ c, b, a \}_1 \}_2,
\label{JTD6} \tag{J6}
\\
&
\{ a, b, \{ c, d, e \}_1 \}_1
\equiv
\{ \{ a, b, c \}_1, d, e \}_1
-
\{ c, \{ b, a, d \}_2, e \}_2
+
\{ \{ a, b, e \}_1, d, c \}_1,
\label{JTD7} \tag{J7}
\\
&
\{ a, b, \{ c, d, e \}_1 \}_2
\equiv
\{ \{ a, b, c \}_2, d, e \}_1
-
\{ c, \{ b, a, d \}_1, e \}_2
+
\{ \{ a, b, e \}_2, d, c \}_1.
\label{JTD8} \tag{J8}
\end{align}
(We have omitted the redundant second identities in lines 2, 3 and 4.)
\end{definition}

We conclude this section with some examples of Jordan triple disystems.

\begin{example}
Let $T$ be a Jordan triple system with product $\{-,-,-\}$ over a field of characteristic not $2$.
It is straightforward to check that $T$ becomes a Jordan triple disystem by setting 
$\{-, -, -\}_1 = \{-,-,-\}_2 = \{-,-,-\}$. 
In particular, every associative algebra gives rise to a Jordan triple disystem by defining 
$\{a, b, c\}_1 = \{a, b, c\}_2 = abc + cba$. 
(For details see Section \ref{associativediproducts}.) 
\end{example}

\begin{example}
Let $A$ be a differential associative algebra in the sense of Loday \cite{LodaySurvey}:
that is, $A$ is an associative algebra with product $a \cdot b$ 
together with a linear map $d\colon A \to A$ such that $d^2 = 0$ and 
$d(a \cdot b) = d(a) \cdot b + a \cdot d(b)$ for all $a, b \in A$.
One endows $A$ with a dialgebra structure by defining 
$a \dashv b = a \cdot d(b)$ and $a \vdash b = d(a) \cdot b$. 
It follows from Section \ref{associativediproducts} that $A$ becomes a Jordan triple disystem by defining
  \[
  \{a,b,c\}_1 = a \cdot d(b) \cdot d(c) + d(c) \cdot d(b) \cdot a,
  \quad 
  \{a,b,c\}_2 = d(a) \cdot b \cdot d(c) + d(c) \cdot b \cdot d(a).
  \]
\end{example}

\begin{example}
Let $L$ be a Leibniz algebra over a field of characteristic not $2$. 
If an element $x \in L$ satisfies $[x,[x,[x,L]]] = \{0\}$ then we define
  \[
  L^{(x)} = \{ \, y \in L \mid [x,[x,y]] = 0 \, \}.
  \] 
By Vel\'asquez and Felipe \cite{VelasquezFelipe1} 
(see also Gubarev and Kolesnikov \cite{GubarevKolesnikov})
we know that the quotient space $L_x = L/L^{(x)}$ 
becomes a Jordan dialgebra with the product $ab = [a,[b,x]]$. 
It follows from Section \ref{Jordandialgebrasection} that
$L_x$ has the structure of a Jordan triple disystem with the trilinear operations
defined as follows for all $a, b, c \in L$:
  \begin{align*}
  &
  \{a, b, c\}_1  =  [[a, [b, x]], [c, x]] - [[a, [c, x]], [b, x]] + [a, [b, [c, x]]],
  \\
  &
  \{a, b, c\}_2  =  [[b, [a, x]], [c, x]] + [[b, [c, x]], [a, x]] - [b, [a, [c, x]]].
  \end{align*}
\end{example}


\section{Jordan triple diproducts in an associative dialgebra} \label{associativediproducts}

In this section, we study two trilinear operations in an associative dialgebra. 
We use computer algebra to determine the identities satisfied by these operations of degree $\le 5$
and prove that these identities are equivalent to those of Theorem \ref{KPidentities}.
We start by recalling the algorithm applied by Bremner and S\'anchez-Ortega \cite{BSO} 
to the alternating ternary sum. 
In the general case it converts a multilinear operation of degree $n$ in an associative algebra 
into a family of $n$ multilinear operations of degree $n$ in an associative dialgebra.

\begin{definition} \label{dioperations} {\bf BSO algorithm.} 

We start with a multilinear operation $\omega$ of degree $n$ in an 
associative algebra over a field $\mathbb{F}$, which we can identify with an element of the group algebra
$\mathbb{F} S_n$:
\[
\omega( a_1, a_2, \dots, a_n )
=
\sum_{\sigma \in S_n}
x_\sigma \, a_{\sigma(1)} a_{\sigma(2)} \cdots a_{\sigma(n)}
\qquad
(x_\sigma \in \mathbb{F}).
\]
For all $i, j = 1, 2, \dots, n$ we collect the terms in which $a_i$ is in position $j$;
we write $S_n^{j,i}$ for the set of permutations $\sigma$ with $\sigma(j) = i$:
\[
\omega_i( a_1, a_2, \dots, a_n )
=
\sum_{j=1}^n
\sum_{S_n^{j,i}}
x_\sigma \, a_{\sigma(1)} \cdots a_{\sigma(j-1)} a_i a_{\sigma(j+1)} \cdots a_{\sigma(n)}.
\]
We define $n$ new multilinear operations in an associative dialgebra;
$\widehat{\omega}_i$ is obtained from $\omega$ by making $a_i$ the center of each dialgebra monomial:
\[
\widehat{\omega}_i( a_1, a_2, \dots, a_n )
=
\sum_{j=1}^n
\sum_{S_n^{(i)}}
x_\sigma \, a_{\sigma(1)} \cdots a_{\sigma(j-1)} \widehat{a}_i 
a_{\sigma(j+1)} \cdots a_{\sigma(n)}.
\]
\end{definition}

\begin{definition} \label{defjtp}
The \textbf{Jordan triple product} in an associative algebra $A$ 
over a field of characteristic not $2$ is the trilinear operation
\[
(a,b,c) = abc + cba.
\]
\end{definition}

\begin{definition}
The \textbf{Jordan triple diproducts} are obtained by applying the BSO algorithm to the Jordan triple product:
\[
(a,b,c)_1 = \widehat{a} b c + c b \widehat{a},
\qquad
(a,b,c)_2 = a \widehat{b} c + c \widehat{b} a,
\qquad
(a,b,c)_3 = a b \widehat{c} + \widehat{c} b a.
\]
It is clear that $(a,b,c)_3 = (c,b,a)_1$, so we will only consider 
$(a,b,c)_1$ and $(a,b,c)_2$.
\end{definition}

In the rest of this section, 
we use computer algebra to determine the multilinear polynomial identities of degrees 3 and 5
satisfied by the Jordan triple diproducts $(\cdots)_1$ and $(\cdots)_2$ over a field of characteristic 0.

\subsection{Degree 3: operation 1}

In this case a polynomial identity is a linear combination of the six permutations of $(a,b,c)_1$:
\[
x_1 (a,b,c)_1 +
x_2 (a,c,b)_1 +
x_3 (b,a,c)_1 +
x_4 (b,c,a)_1 +
x_5 (c,a,b)_1 +
x_6 (c,b,a)_1.
\]
We expand each diproduct to obtain a linear combination of the 18 multilinear dialgebra monomials of degree 3 
ordered as follows:
\[
\widehat{a}bc, \widehat{a}cb, \widehat{b}ac, \widehat{b}ca, 
\widehat{c}ab, \widehat{c}ba,
a\widehat{b}c, a\widehat{c}b, b\widehat{a}c, b\widehat{c}a, 
c\widehat{a}b, c\widehat{b}a,
ab\widehat{c}, ac\widehat{b}, ba\widehat{c}, bc\widehat{a}, 
ca\widehat{b}, cb\widehat{a}.
\]
We construct the $18 \times 6$ matrix $E$ in which the $(i,j)$ entry is the coefficient of 
the $i$-th dialgebra monomial in the expansion of the $j$-th diproduct monomial:
\[
E^t
=
\left[ \tiny
\begin{array}{cccccccccccccccccc}
1 & . & . & . & . & . & . & . & . & . & . & . & . & . & . & . & . & 1 \\
. & 1 & . & . & . & . & . & . & . & . & . & . & . & . & . & 1 & . & . \\
. & . & 1 & . & . & . & . & . & . & . & . & . & . & . & . & . & 1 & . \\
. & . & . & 1 & . & . & . & . & . & . & . & . & . & 1 & . & . & . & . \\
. & . & . & . & 1 & . & . & . & . & . & . & . & . & . & 1 & . & . & . \\
. & . & . & . & . & 1 & . & . & . & . & . & . & 1 & . & . & . & . & .
\end{array}
\right]
\]
The coefficient vectors of the polynomial identities satisfied by $(-,-,-)_1$ 
are the vectors in the nullspace of $E$, which is the zero subspace since $\mathrm{rank}(E) = 6$.

\begin{lemma}
The diproduct $(\cdots)_1$ satisfies no polynomial identity of degree 3.
\end{lemma}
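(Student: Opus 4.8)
The plan is to verify the claim by a direct rank computation on the matrix $E$ whose construction is already described in the text. A polynomial identity of degree $3$ satisfied by $(-,-,-)_1$ is by definition a linear combination of the six permuted diproducts $(a,b,c)_1, (a,c,b)_1, \dots, (c,b,a)_1$ that vanishes identically when each diproduct is expanded in terms of the $18$ basis monomials of the free dialgebra on $\{a,b,c\}$ given by Loday's normal form. Writing the coefficients $x_1, \dots, x_6$ as an unknown vector, the condition that the combination vanishes is exactly $E x = 0$, so the space of identities is the nullspace of $E$. Hence it suffices to show $E$ has trivial nullspace, i.e. $\operatorname{rank}(E) = 6$.

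First I would read off the columns of $E$ (equivalently, the rows of $E^t$ as displayed). Each permuted diproduct expands into exactly two normal-form monomials: for instance $(a,b,c)_1 = \widehat{a}bc + cb\widehat{a}$ contributes a $1$ in the rows indexed by $\widehat{a}bc$ and $cb\widehat{a}$, and similarly for the other five permutations. The key structural observation is that the six monomials appearing as the \emph{first} summand of each diproduct, namely $\widehat{a}bc, \widehat{a}cb, \widehat{b}ac, \widehat{b}ca, \widehat{c}ab, \widehat{c}ba$ (the six monomials with the hat on the leftmost letter), are pairwise distinct and each occurs in exactly one of the six diproducts. This alone gives a $6 \times 6$ submatrix of $E$ equal to the identity matrix (up to row and column permutation), which immediately forces $\operatorname{rank}(E) = 6$.

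The cleanest way to present this is to exhibit the $6 \times 6$ minor of $E$ obtained by restricting to those six leading-hat rows: since the $j$-th diproduct has its leading term in a row that no other diproduct touches, this minor is a permutation matrix and in particular nonsingular. Therefore $\operatorname{rank}(E) = 6$, the nullspace is $\{0\}$, and no nontrivial identity exists. I would note that one need not even examine the second summands (the trailing-hat monomials) to conclude, though they provide an independent confirmation.

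There is really no serious obstacle here; the statement is a finite linear-algebra fact and the matrix is already laid out in the excerpt. The only point requiring care is the bookkeeping: one must confirm that the six leading-hat monomials are genuinely distinct and that each diproduct's first term lands in a distinct one of them, so that the claimed submatrix really is a permutation matrix rather than having an accidental coincidence of monomials. Given the explicit ordering of the $18$ monomials and the explicit form of the diproducts, this check is immediate by inspection of $E^t$.
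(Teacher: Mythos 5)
Your proposal is correct and follows essentially the same route as the paper: set up the $18 \times 6$ expansion matrix $E$, identify the space of degree-3 identities with its nullspace, and conclude from $\operatorname{rank}(E) = 6$. The one refinement you add is a hand-checkable reason for the rank claim --- the six leading-hat monomials $\widehat{a}bc, \widehat{a}cb, \widehat{b}ac, \widehat{b}ca, \widehat{c}ab, \widehat{c}ba$ each occur in exactly one diproduct, yielding an identity submatrix --- whereas the paper simply reports the rank from the computation; your observation is accurate and makes the lemma independent of computer algebra.
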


\subsection{Degree 3: operation 2}

Replacing $(\cdots)_1$ by $(\cdots)_2$ gives the matrix
\[
E^t
=
\left[ \tiny
\begin{array}{cccccccccccccccccc}
. & . & . & . & . & . & 1 & . & . & . & . & 1 & . & . & . & . & . & . \\
. & . & . & . & . & . & . & 1 & . & 1 & . & . & . & . & . & . & . & . \\
. & . & . & . & . & . & . & . & 1 & . & 1 & . & . & . & . & . & . & . \\
. & . & . & . & . & . & . & 1 & . & 1 & . & . & . & . & . & . & . & . \\
. & . & . & . & . & . & . & . & 1 & . & 1 & . & . & . & . & . & . & . \\
. & . & . & . & . & . & 1 & . & . & . & . & 1 & . & . & . & . & . & .
\end{array}
\right]
\]
We compute the row canonical form and obtain the canonical basis of the nullspace:
\[
\left[\begin{array}{cccccc} 0 & -1 & 0 & 1 & 0 & 0 \end{array} \right],
\quad
\left[\begin{array}{cccccc} 0 & 0 & -1 & 0 & 1 & 0 \end{array} \right],
\quad
\left[\begin{array}{cccccc} -1 & 0 & 0 & 0 & 0 & 1 \end{array} \right].
\]

\begin{lemma} \label{operation2degree3}
Every polynomial identity of degree 3 satisfied by the diproduct $(\cdots)_2$ 
follows from the symmetry in the first and third arguments: $(a,b,c)_2 \equiv (c,b,a)_2$.
\end{lemma}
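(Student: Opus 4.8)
The plan is to reduce the lemma to a finite linear-algebra computation, which the discussion preceding the statement has essentially carried out, and then to interpret the result representation-theoretically. A multilinear degree-$3$ identity in the single operation $(\cdots)_2$ is by definition a linear combination of the six permuted diproducts $(a,b,c)_2, (a,c,b)_2, (b,a,c)_2, (b,c,a)_2, (c,a,b)_2, (c,b,a)_2$, so the space of candidate identities is six-dimensional. Expanding each diproduct via $(x,y,z)_2 = x \widehat{y} z + z \widehat{y} x$ into the basis of $18$ dialgebra monomials gives the $18 \times 6$ matrix $E$ displayed above, and an identity holds exactly when its coefficient vector lies in the nullspace of $E$. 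Thus the first step is simply to read off $\mathrm{rank}(E) = 3$ together with the exhibited canonical basis of the nullspace.

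The key step is to recognize each of the three nullspace basis vectors as a substitution instance of the single symmetry $(a,b,c)_2 \equiv (c,b,a)_2$. With the ordering of permutations fixed as above, the vector $[-1,0,0,0,0,1]$ encodes $(a,b,c)_2 \equiv (c,b,a)_2$ itself; the vector $[0,-1,0,1,0,0]$ encodes $(a,c,b)_2 \equiv (b,c,a)_2$, which is the symmetry applied to the triple $(a,c,b)$; and $[0,0,-1,0,1,0]$ encodes $(b,a,c)_2 \equiv (c,a,b)_2$, the symmetry applied to $(b,a,c)$. Conversely, the coefficient vectors of these three instances, namely $[1,0,0,0,0,-1]$, $[0,1,0,-1,0,0]$ and $[0,0,1,0,-1,0]$, have distinct pivots and are therefore linearly independent; they lie inside the nullspace and so span it.

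It remains to argue that this orbit-span is precisely the set of \emph{consequences} of the symmetry in degree $3$. Because we work with a single trilinear operation in fixed multilinear degree $3$, the only operations available for deriving one identity from another are linear combination and permutation of the three arguments $a, b, c$; substituting composite arguments would raise the degree and is excluded. Hence the consequences of $(a,b,c)_2 \equiv (c,b,a)_2$ form exactly the span of the $S_3$-orbit of its coefficient vector, which we have just identified with the full nullspace of $E$. I expect the only genuine obstacle to lie in the bookkeeping of the first step: one must confirm that the expansion into the $18$ monomials produces no accidental collapse and that $\mathrm{rank}(E)$ is exactly $3$, since a smaller rank would signal identities not implied by symmetry. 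Once the dimension count $6 - 3 = 3$ is secured, the representation-theoretic interpretation is routine.
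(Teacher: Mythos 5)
Your proposal is correct and follows essentially the same route as the paper: the paper also expands the six permuted diproducts into the $18$ dialgebra monomials, computes the nullspace of the resulting $18\times 6$ matrix (of rank $3$), and identifies the three nullspace basis vectors with the $S_3$-orbit of the symmetry $(a,b,c)_2 \equiv (c,b,a)_2$. Your additional remark that, in fixed multilinear degree $3$, the consequences of the symmetry are exactly the span of its permutation orbit is a correct and welcome clarification of a point the paper leaves implicit.
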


\subsection{Degree 5: operation 1}

Since $(\cdots)_1$ satisfies no polynomial identity of degree 3, 
we consider three association types of degree 5:
\[
((a,b,c)_1,d,e)_1,
\qquad
(a,(b,c,d)_1,e)_1,
\qquad
(a,b,(c,d,e)_1)_1.
\]
There are $3 \cdot 5!$ multilinear diproduct monomials of these three types,
and $5 \cdot 5!$ dialgebra monomials of the forms
$\widehat{a}bcde$,
$a\widehat{b}cde$,
$ab\widehat{c}de$,
$abc\widehat{d}e$,
$abcd\widehat{e}$.
We expand the diproduct monomials in an associative dialgebra and obtain
\begin{align*}
((a,b,c)_1,d,e)_1
&=
\widehat{a} b c d e + c b \widehat{a} d e + e d \widehat{a} b c + e d c 
b \widehat{a},
\\
(a,(b,c,d)_1,e)_1
&=
\widehat{a} b c d e + \widehat{a} d c b e + e b c d \widehat{a} + e d c 
b \widehat{a},
\\
(a,b,(c,d,e)_1)_1
&=
\widehat{a} b c d e + \widehat{a} b e d c + c d e b \widehat{a} + e d c 
b \widehat{a}.
\end{align*}
We construct the $600 \times 360$ matrix $E$ in which the $(i,j)$ entry is 
the coefficient of the $i$-th dialgebra monomial in the expansion of the $j$-th diproduct monomial.
Using a computer algebra system, we find that $\mathrm{rank}(E) = 150$,
and hence the nullspace has dimension 210.
We compute the canonical basis of the nullspace, and find that all the components are $\pm 1$.
We sort these vectors by increasing number of nonzero components:
there are 30 with two, 120 with four, and 60 with six.
We construct the $480 \times 360$ matrix $M$
with a $360 \times 360$ upper block and a $120 \times 360$ lower block.
For each nullspace basis vector, we perform the following computations:
\begin{enumerate}
\item
Apply all permutations of $a,b,c,d,e$ to the corresponding linear combination of diproduct monomials, 
and store the results in the lower block.
\item
Compute the row canonical form; the lower block is now zero, and
the upper block contains a basis for the subspace of the nullspace generated by
the nullspace basis vectors up to the current vector.
\item
If the rank of the matrix has increased from the previous vector to the 
current vector,
then we record the current vector as a generator.
\end{enumerate}
We obtain three generators of the nullspace, corresponding to these identities:
\begin{align*}
&
( a, (b,c,d)_1, e )_1 - ( a, (d,c,b)_1, e )_1
\equiv 0,
\\
&
( (e,b,a)_1, d, c )_1
- ( (e,d,c)_1, b, a )_1
- ( e, (b,a,d)_1, c )_1
+ ( e, d, (c,b,a)_1 )_1
\equiv 0,
\\
&
( (e,b,a)_1, d, c )_1
- ( (e,d,c)_1, b, a )_1
- ( e, (b,a,d)_1, c )_1
+ ( e, d, (a,b,c)_1 )_1
\equiv 0.
\end{align*}
A similar computation shows that no two of these identities generate the entire nullspace.
The difference of the second and third identities is
\[
( e, d, (c,b,a)_1 )_1 - ( e, d, (a,b,c)_1 )_1
\equiv 0,
\]
and this gives a simpler set of three generating identities.

\begin{proposition} \label{operation1degree5}
Every polynomial identity of degree 5 satisfied by the diproduct $(\cdots)_1$ 
is a consequence of these three independent identities:
\begin{align*}
&
( a, (b,c,d)_1, e )_1 \equiv ( a, (d,c,b)_1, e )_1,
\qquad
( a, b, (c,d,e)_1 )_1 \equiv ( a, b, (e,d,c)_1 )_1,
\\
&
( (e,d,c)_1, b, a )_1 \equiv
( (e,b,a)_1, d, c )_1 - ( e, (d,a,b)_1, c )_1 + ( e, d, (c,b,a)_1 )_1.
\end{align*}

\end{proposition}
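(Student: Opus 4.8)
The plan is to identify the space of all multilinear degree-5 identities satisfied by $(\cdots)_1$ with the nullspace of the expansion matrix $E$, and then to exhibit three elements whose $S_5$-orbit spans that entire nullspace. First I would fix the three association types and the ordered basis of $3 \cdot 5! = 360$ diproduct monomials, together with the ordered basis of $5 \cdot 5! = 600$ dialgebra normal-form monomials; the linear map sending a diproduct monomial to its expansion in the free dialgebra is recorded by $E$, and a multilinear identity of degree 5 is precisely a vector in $\ker E$. The preceding computation gives $\operatorname{rank}(E) = 150$, so the identity space has dimension $210$; this number is the target I must reach.

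Next I would exploit the action of the symmetric group $S_5$ permuting the arguments $a,b,c,d,e$. Since permuting variables commutes with expansion into the free dialgebra, $E$ is $S_5$-equivariant and $\ker E$ is an $S_5$-submodule of the $360$-dimensional space of diproduct polynomials. The notion of \emph{consequence} in the statement is exactly membership in the $S_5$-submodule generated by the given identities: any multilinear degree-5 identity deducible from them is a linear combination of their images under substitutions, and since everything is already multilinear the only available substitutions are permutations of the arguments. Thus the proposition reduces to the claim that the three displayed identities generate all of $\ker E$ as an $S_5$-module.

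To verify this I would apply all $120$ permutations to each of the three candidate identities, assemble the resulting vectors as rows of a matrix, and compute its rank; the claim is that this rank equals $210$, matching $\dim \ker E$, which forces the submodule they generate to coincide with the whole identity space. The independence assertion is checked the same way: deleting any one of the three generators and recomputing the rank of the permuted span should yield a value strictly below $210$, so none of the three is redundant. The simplification noted just before the statement---taking the difference of two generators to replace a derivation-type relation by the cleaner symmetry $(a,b,(c,d,e)_1)_1 \equiv (a,b,(e,d,c)_1)_1$---merely changes the chosen generating set within the same submodule and does not affect the rank.

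The main obstacle is trustworthiness of the rank computations rather than any conceptual difficulty: the entire argument rests on confirming $\operatorname{rank}(E) = 150$ and that the $S_5$-span of the three identities has dimension exactly $210$. Because all nullspace basis vectors have entries $\pm 1$, these ranks can be computed exactly over $\mathbb{Q}$ (and double-checked modulo a large prime to guard against a coincidental rank drop), and the incremental row-reduction procedure described above both certifies the dimension count and isolates a minimal generating set simultaneously.
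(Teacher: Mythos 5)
Your proposal is correct and follows essentially the same route as the paper: both identify the degree-5 identities with the nullspace of the $600\times 360$ expansion matrix $E$ (rank $150$, nullity $210$) and then certify that the $S_5$-orbits of the three stated identities span that $210$-dimensional space, with independence checked by showing that any proper subset of the generators spans strictly less. The only cosmetic difference is that the paper discovers the generators by incrementally processing the sorted nullspace basis vectors, whereas you verify a given generating set directly; the underlying computation is the same.
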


\subsection{Degree 5: operation 2}

Lemma \ref{operation2degree3} implies that we need to consider only two association types 
for the diproduct $(\cdots)_2$ of degree 5:
\begin{align*}
((a,b,c)_2,d,e)_2
&=
a b c \widehat{d} e + c b a \widehat{d} e + e \widehat{d} a b c + e 
\widehat{d} c b a,
\\
(a,(b,c,d)_2,e)_2
&=
a b \widehat{c} d e + a d \widehat{c} b e + e b \widehat{c} d a + e d 
\widehat{c} b a.
\end{align*}
The first type is symmetric in $a$ and $c$, giving $5!/2$ monomials;
the second is symmetric in $a$ and $e$ and in $b$ and $d$, giving $5!/4$ monomials.
We construct the $600 \times 90$ matrix $E$ in which the $(i,j)$ entry is 
the coefficient of the $i$-th dialgebra monomial in the expansion of the $j$-th diproduct monomial.
We find that $\mathrm{rank}(E) = 90$.

\begin{proposition}
Every polynomial identity of degree 5 satisfied by the diproduct $(\cdots)_2$ is a consequence
of the symmetry of Lemma \ref{operation2degree3}.
\end{proposition}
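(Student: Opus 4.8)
The plan is to reinterpret the stated rank computation as a statement about the kernel of the expansion map $E$. By definition, a polynomial identity of degree $5$ for $(\cdots)_2$ is a linear combination of degree-$5$ diproduct monomials that expands to zero in the free associative dialgebra. Since by Lemma \ref{operation2degree3} the only degree-$3$ identity for $(\cdots)_2$ is the symmetry $(a,b,c)_2 \equiv (c,b,a)_2$, I would first use this symmetry to rewrite every diproduct monomial in terms of a fixed set of representatives. The outer symmetry identifies each monomial of the third association type $(a,b,(c,d,e)_2)_2$ with one of the first type $((c,d,e)_2,b,a)_2$, eliminating that type; within the first type the inner symmetry $a \leftrightarrow c$ collapses the $120$ permutations into $5!/2 = 60$ classes, and within the second type the two symmetries $a \leftrightarrow e$ and $b \leftrightarrow d$ collapse them into $5!/4 = 30$ classes. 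This yields exactly the $90$ representatives indexing the columns of $E$.

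With this reduction in place, a degree-$5$ identity corresponds precisely to a vector $v \in \mathbb{F}^{90}$ with $Ev = 0$: the $j$-th column of $E$ records the normal-form expansion (Section \ref{preliminaries}) of the $j$-th diproduct monomial $m_j$, so $Ev = 0$ asserts exactly that the combination $\sum_{j} v_j m_j$ vanishes identically as a dialgebra element. The computation $\mathrm{rank}(E) = 90$, together with the fact that $E$ has exactly $90$ columns, forces $\dim \ker E = 90 - 90 = 0$.

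Hence the kernel is trivial, and the only vanishing combination of the $90$ representatives is the zero combination. Unwinding the reduction, this means that every degree-$5$ identity satisfied by $(\cdots)_2$ lies in the span of the relations obtained by applying the symmetry of Lemma \ref{operation2degree3}, which is exactly the claim.

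The main obstacle is not this concluding logical step, which is immediate once the rank is known, but the bookkeeping underlying it: one must check that the symmetry relations generate precisely the kernel of the projection from all $3 \cdot 5!$ diproduct monomials onto the $90$ representatives, and that each diproduct monomial has been expanded correctly into normal form using the five associative dialgebra axioms before the matrix $E$ is assembled. Both are verified by the computer algebra computation, so the correctness of the conclusion ultimately rests on that implementation.
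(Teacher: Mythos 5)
Your proposal is correct and follows essentially the same route as the paper: the subsection's computation is exactly the reduction to the $90$ symmetry-representatives, the assembly of the $600 \times 90$ expansion matrix $E$, and the observation that $\mathrm{rank}(E) = 90$ forces a trivial nullspace, so no identities beyond the symmetry of Lemma \ref{operation2degree3} exist in degree $5$. Your reinterpretation of the rank statement as a kernel computation is precisely the implicit argument in the paper.
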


\subsection{Degree 5: operations 1 and 2}

We now consider multilinear polynomial identities which involve both diproducts $(\cdots)_1$ and $(\cdots)_2$.
In addition to the three association types for $(\cdots)_1$ and the two association types for $(\cdots)_2$,
we must also consider the five association types involving both operations:
\begin{center}
\begin{tabular}{rlrl}
1: &\quad $((a,b,c)_1,d,e)_1$ &\qquad
6: &\quad $((a,b,c)_2,d,e)_1$ \\
2: &\quad $(a,(b,c,d)_1,e)_1$ &\qquad
7: &\quad $(a,(b,c,d)_2,e)_1$ \\
3: &\quad $(a,b,(c,d,e)_1)_1$ &\qquad
8: &\quad $(a,b,(c,d,e)_2)_1$ \\
4: &\quad $((a,b,c)_2,d,e)_2$ &\qquad
9: &\quad $((a,b,c)_1,d,e)_2$ \\
5: &\quad $(a,(b,c,d)_2,e)_2$ &\qquad
10: &\quad $(a,(b,c,d)_1,e)_2$
\end{tabular}
\end{center}
Lemma \ref{operation2degree3} and Proposition \ref{operation1degree5} imply the following results:
\begin{center}
\begin{tabular}{rllrll}
&\quad symmetries &\quad monomials & &\quad symmetries &\quad monomials \\
1: &\quad $-$ &\quad 5! = 120 &\qquad
6: &\quad $a \leftrightarrow c$ &\quad 5!/2 = 60 \\
2: &\quad $b \leftrightarrow d$ &\quad 5!/2 = 60 &\qquad
7: &\quad $b \leftrightarrow d$ &\quad 5!/2 = 60 \\
3: &\quad $c \leftrightarrow e$ &\quad 5!/2 = 60 &\qquad
8: &\quad $c \leftrightarrow e$ &\quad 5!/2 = 60 \\
4: &\quad $a \leftrightarrow c$ &\quad 5!/2 = 60 &\qquad
9: &\quad $-$ &\quad 5! = 120 \\
5: &\quad $a \leftrightarrow e$; $b \leftrightarrow d$ &\quad 5!/4 = 30 &\qquad
10: &\quad $a \leftrightarrow e$ &\quad 5!/2 = 60
\end{tabular}
\end{center}
The total number of multilinear diproduct monomials is 690.
In this case, the matrix $E$ has size $600 \times 690$,
and as before the $(i,j)$ entry is the coefficient of the $i$-th dialgebra monomial
in the expansion of the $j$-th diproduct monomial.
For such a large matrix we use modular arithmetic to do Gaussian elimination
in order to control the memory requirements.
Since we consider multilinear monomials, all vector spaces are representations of $S_5$;
if use a modulus $p > 5$ we will obtain
dimensions equivalent to those which we would have obtained using rational arithmetic.
We find that $\mathrm{rank}(E) = 250$, and hence the nullspace has dimension 440.
We compute the canonical basis and sort it by increasing number of nonzero components.

We now construct an $810 \times 690$ matrix $M$ with a $690 \times 690$ upper block and a $120 \times 690$ lower block.
Before processing the 440 nullspace identities,
we first process the third identity of Proposition \ref{operation1degree5}, which we rewrite as follows:
\[
( (a,b,c)_1, d, e )_1 - ( (a,d,e)_1, b, c )_1 + ( a, (b,e,d)_1, c )_1 - 
( a, b, (c,d,e)_1 )_1
\equiv
0.
\]
This is the only known identity that has not already been taken into account; 
the other identities from Lemma \ref{operation2degree3} and Proposition \ref{operation1degree5} 
express symmetries which have been assumed in our enumeration of the diproduct monomials.
This identity generates a 90-dimensional subspace of the nullspace.
Continuing with the 440 nullspace vectors, we obtain six additional generators (we omit ``$\equiv 0$''):
\allowdisplaybreaks
\begin{align}
&
( (c,b,a)_2, d, e )_2 - ( (a,b,c)_1, d, e )_2,
\label{caid1}
\\
&
( a, (b,c,d)_1, e )_1 - ( a, (b,c,d)_2, e )_1,
\label{caid2}
\\
&
( (e,d,c)_2, b, a )_2 - ( (a,b,e)_2, d, c )_1 - ( (a,b,c)_2, d, e )_1 + 
( e, (b,a,d)_1, c )_2,
\label{caid3}
\\
&
( (e,b,a)_2, d, c )_2 + ( (c,b,a)_2, d, e )_2 - ( (e,d,c)_2, b, a )_1 - 
( e, (d,a,b)_1, c )_2,
\label{caid4}
\\
&
( (e,b,a)_1, d, c )_1 - ( (e,d,c)_1, b, a )_1 - ( e, (b,a,d)_1, c )_1 + 
( e, d, (a,b,c)_2 )_1,
\label{caid5}
\\
&
( (a,b,c)_1, d, e )_1 + ( (a,d,c)_1, b, e )_1 - ( a, (b,c,d)_1, e )_1 - 
( c, (b,a,d)_2, e )_2.
\label{caid6}
\end{align}
Further calculations show that if we take any proper subset of these six identities, 
and combine it with the third identity of Proposition \ref{operation1degree5}, 
then the resulting set of identities does not generate the entire nullspace. 
It follows that identities \eqref{caid1}--\eqref{caid6} are independent,
modulo the third identity of Proposition \ref{operation1degree5}.
Using Lemma \ref{operation2degree3} and Proposition \ref{operation1degree5}, 
we see that \eqref{caid1}, \eqref{caid2}, \eqref{caid5} are equivalent to
\begin{align*}
&
( (a,b,c)_2, d, e )_2 \equiv ( (a,b,c)_1, d, e )_2,
\\
&
( a, (b,c,d)_1, e )_1 \equiv ( a, (b,c,d)_2, e )_1,
\\
&
( (e,b,a)_1, d, c )_1 - ( (e,d,c)_1, b, a )_1 - ( e, (d,a,b)_1, c )_1 + 
( e, d, (c,b,a)_2 )_1
\equiv 0.
\end{align*}
From the last identity we subtract the third identity of Proposition \ref{operation1degree5} and obtain
\[
( a, b, (c,d,e)_1 )_1 \equiv ( a, b, (c,d,e)_2 )_1.
\]
We now see that identities \eqref{caid1}--\eqref{caid6} are equivalent 
to the following six identities (we omit ``$\equiv 0$'' in the last three):
\allowdisplaybreaks
\begin{align*}
&
( (a,b,c)_2, d, e )_2 \equiv ( (a,b,c)_1, d, e )_2,
\\
&
( a, (b,c,d)_1, e )_1 \equiv ( a, (b,c,d)_2, e )_1,
\\
&
( a, b, (c,d,e)_1 )_1 \equiv ( a, b, (c,d,e)_2 )_1,
\\
&
( (e,d,c)_2, b, a )_2 - ( (a,b,e)_2, d, c )_1 - ( (a,b,c)_2, d, e )_1 + 
( e, (b,a,d)_1, c )_2,
\\
&
( (e,b,a)_2, d, c )_2 + ( (c,b,a)_2, d, e )_2 - ( (e,d,c)_2, b, a )_1 - 
( e, (d,a,b)_1, c )_2,
\\
&
( (a,b,c)_1, d, e )_1 + ( (a,d,c)_1, b, e )_1 - ( a, (b,c,d)_1, e )_1 - 
( c, (b,a,d)_2, e )_2.
\end{align*}
The last three identities are equivalent (assuming known symmetries) to
\begin{align*}
&
( (e,d,c)_1, b, a )_2
\equiv
( (e,b,a)_2, d, c )_1 - ( e, (b,a,d)_1, c )_2 + ( (c,b,a)_2, d, e )_1.
\\
&
( (e,d,c)_2, b, a )_1
\equiv
( (e,b,a)_1, d, c )_2 - ( e, (d,a,b)_1, c )_2 + ( e, d, (c,b,a)_1 )_2.
\\
&
( a, (b,c,d)_2, e )_2
\equiv
( (c,b,a)_1, d, e )_1 + ( (c,d,a)_1, b, e )_1 - ( c, (b,a,d)_1, e )_1.
\end{align*}
We have proved the following result.

\begin{theorem} \label{identities Maple}
Every polynomial identity of degree $\le 5$ satisfied by the Jordan triple diproducts
$(\cdots)_1$ and $(\cdots)_2$ in an associative dialgebra
is a consequence of these eight independent identities:
\begin{align*}
&
(a,b,c)_2 \equiv (c,b,a)_2,
\\
&
( a, (b,c,d)_1, e )_1 \equiv ( a, (b,c,d)_2, e )_1,
\\
&
( a, b, (c,d,e)_1 )_1 \equiv ( a, b, (c,d,e)_2 )_1,
\\
&
( (a,b,c)_1, d, e )_2 \equiv ( (a,b,c)_2, d, e )_2,
\\
&
( (e,d,c)_1, b, a )_1
\equiv
( (e,b,a)_1, d, c )_1 - ( e, (d,a,b)_1, c )_1 + ( e, d, (c,b,a)_1 )_1,
\\
&
( (e,d,c)_2, b, a )_1
\equiv
( (e,b,a)_1, d, c )_2 - ( e, (d,a,b)_1, c )_2 + ( e, d, (c,b,a)_1 )_2,
\\
&
( (e,d,c)_1, b, a )_2
\equiv
( (e,b,a)_2, d, c )_1 - ( e, (b,a,d)_1, c )_2 + ( (c,b,a)_2, d, e )_1,
\\
&
( a, (b,c,d)_2, e )_2
\equiv
( (c,b,a)_1, d, e )_1 + ( (c,d,a)_1, b, e )_1 - ( c, (b,a,d)_1, e )_1.
\end{align*}
\end{theorem}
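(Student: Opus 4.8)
The plan is to reduce the statement to a single finite linear algebra computation over the free associative dialgebra, exploiting the fact that everything in sight is multilinear and hence carries an action of $S_5$ by permuting the variables $a,b,c,d,e$. First I would fix the target space: by Loday's basis theorem for free dialgebras, the multilinear dialgebra monomials of degree $5$ are exactly the $5 \cdot 5! = 600$ normal forms $a_{\sigma(1)} \cdots \widehat{a}_{\sigma(j)} \cdots a_{\sigma(5)}$, and by the dialgebra axioms together with the definitions $(a,b,c)_1 = \widehat{a}bc + cb\widehat{a}$ and $(a,b,c)_2 = a\widehat{b}c + c\widehat{b}a$, every diproduct monomial expands uniquely as a linear combination of these $600$ basis elements. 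A polynomial identity is then precisely a linear relation among the expanded diproduct monomials, that is, a vector in the nullspace of the coefficient matrix.

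Next I would enumerate the source space. Using the degree-$3$ results (the Lemma on operation $1$ and Lemma \ref{operation2degree3} on operation $2$) and the degree-$5$ single-operation results (Proposition \ref{operation1degree5} and its analogue for operation $2$), I can restrict attention to the ten association types listed, and within each type factor out the symmetries already established; this is what reduces the naive count to the $690$ independent diproduct monomials recorded in the table. Expanding these into the $600$-dimensional dialgebra basis yields the $600 \times 690$ matrix $E$, whose rank $\mathrm{rank}(E) = 250$ gives a space of identities of dimension $440$.

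The substantive step is to extract a minimal generating set from this $440$-dimensional nullspace. Since the nullspace is an $S_5$-submodule, I would compute its canonical basis, sort the vectors by increasing number of nonzero components, and process them greedily against a working module. I would seed this module with the $S_5$-orbit of the third identity of Proposition \ref{operation1degree5}, which is the only previously known relation not already absorbed into the enumeration of monomials (the remaining known relations are pure symmetries that were built into the monomial count). At each stage I adjoin the $S_5$-orbit of the next basis vector and record it as a generator exactly when it enlarges the working module; this produces the six further generators \eqref{caid1}--\eqref{caid6}, which, rewritten using the known symmetries, become the eight identities of the statement. Independence is then certified by verifying that deleting any single generator strictly lowers the dimension of the generated module.

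The main obstacle is computational rather than conceptual. The $600 \times 690$ matrix is large enough that exact rational arithmetic is expensive in memory, so the rank and nullspace must be computed over a prime $p > 5$, for which the representation-theoretic dimensions coincide with those in characteristic $0$. The delicate part is the greedy reduction together with the independence check: one must confirm that the candidate generators, under the combined action of $S_5$ and the seeded relation, span the full $440$-dimensional nullspace, while no proper subset does — verifications that are only practical by machine.
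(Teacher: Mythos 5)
Your proposal follows the paper's own proof essentially step for step: the same $600\times 690$ expansion matrix over a prime $p>5$, the same rank $250$ and nullspace dimension $440$, the same seeding of the working $S_5$-module with the third identity of Proposition \ref{operation1degree5}, the same greedy extraction of the six extra generators, and the same independence check. This matches the paper's argument, so nothing further is needed.
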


\begin{remark}
These identities are equivalent to those of Theorem \ref{KPidentities}. 
We have
\begin{align*}
&
(a, (b,c,d)_1, e)_1 
\equiv 
(a, (b, c, d)_2, e)_1 
\equiv 
(a, (d, c, b)_2, e)_1
\equiv
(a, (d, c, b)_1, e)_1,
\\
&
(a, b,(c,d,e)_1)_1 
\equiv 
(a, b, (c,d, e)_2)_1 
\equiv 
(a, b, (e,d,c)_2)_1
\equiv
(a, b, (e, d, c)_1)_1,
\\
&
((a,b,c)_1, d , e)_2 
\equiv 
((a,b,c)_2, d, e)_2 
\equiv 
((c,b,a)_2, d, e)_2
\equiv
((c,b,a)_1, d, e)_2,
\end{align*} 
corresponding to the identities in the first four lines displayed in Theorem \ref{KPidentities}. 
The fifth and sixth identities of Theorem \ref{identities Maple} coincide with the fifth and sixth lines 
displayed in Theorem \ref{KPidentities}.
Using the known identities,
\begin{align*}
&
( (e,d,c)_1, b, a )_2
\equiv 
( (c,d,e)_1, b, a )_2
\equiv 
( a, b, (c,d,e)_1 )_2,
\\
&
( (e,b,a)_2, d, c )_1
\equiv
( (a,b,e)_2, d, c )_1,
\qquad
( (c,b,a)_2, d, e )_1
\equiv
( (a,b,c)_2, d, e )_1,
\\
&
( e,(b,a, d)_1, c )_2
\equiv
( c, (b,a, d)_1, e )_2,
\end{align*} 
we see that the seventh identity of Theorem \ref{identities Maple} becomes the eighth line 
displayed in Theorem \ref{KPidentities}. 
To conclude, we apply the transposition $ac$ to the eighth identity of Theorem \ref{identities Maple}:
  \[
  ( c, (b,a, d)_2, e )_2  
  \equiv 
  ( (a,b,c)_1, d, e )_1 + ( (a,d,c)_1, b, e )_1 - ( a, (b,c, d)_1, e )_1 
  \]
Using this equivalent form of the fifth identity of Theorem \ref{identities Maple},
  \[
  ( (a,d,c)_1, b, e )_1 - ( a, (b,c, d)_1, e )_1 
  \equiv
  - ( a,b,(c, d, e)_1)_1 + ( (a,b,e)_1, d, c )_1,
  \]
we obtain
  \[
  ( c, (b,a, d)_2, e )_2  
  \equiv 
  ( (a,b,c)_1, d, e )_1 - ( a,b,(c, d, e)_1)_1 + ( (a,b,e)_1, d, c )_1, 
  \]
which is equivalent to the seventh line displayed in Theorem \ref{KPidentities}.
\end{remark}

\begin{theorem} \label{associativedialgebratheorem}
If $D$ is a subspace of an associative dialgebra over a field of characteristic not $2, 3, 5$
which is closed under the Jordan diproducts $(\cdots)_1$ and $(\cdots)_2$,
then $D$ is a Jordan triple disystem with respect to these operations.
\end{theorem}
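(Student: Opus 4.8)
The plan is to reduce the assertion to the identity classification already obtained. By definition a Jordan triple disystem is a vector space carrying two trilinear operations that satisfy the eight identities \eqref{JTD1}--\eqref{JTD8}; these are exactly the identities produced by the KP algorithm in Theorem \ref{KPidentities}, after the redundant second equalities in the second, third and fourth lines are discarded. Hence it is enough to prove that the two Jordan triple diproducts $(\cdots)_1$ and $(\cdots)_2$, regarded as operations on the whole ambient associative dialgebra, satisfy these eight identities. Since each of \eqref{JTD1}--\eqref{JTD8} is a multilinear relation in the two operations and $D$ is closed under both, every such relation restricts to $D$ with values back in $D$; thus $D$ with the restricted operations is automatically a Jordan triple disystem once the relations are known to hold for the diproducts.

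To establish the identities for the diproducts I would simply assemble the earlier results. The symmetry \eqref{JTD1} is Lemma \ref{operation2degree3}. Theorem \ref{identities Maple} asserts, among other things, that $(\cdots)_1$ and $(\cdots)_2$ satisfy its eight displayed identities, and the remark following that theorem verifies, step by step, that this list is equivalent to the list of Theorem \ref{KPidentities}. Combining these two facts shows that the diproducts satisfy every KP identity, hence all of \eqref{JTD2}--\eqref{JTD8}: equivalence of identity sets means each member of one list is a consequence of the other, and an operation satisfying a set of identities satisfies all their consequences. No fresh computation is needed; the content is repackaged from Lemma \ref{operation2degree3}, Proposition \ref{operation1degree5} and Theorem \ref{identities Maple}.

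The only genuine issue, and the step I expect to be the main obstacle, is the passage to a general field of characteristic $\ne 2, 3, 5$, since the classification in Theorem \ref{identities Maple} was carried out by Gaussian elimination over characteristic $0$ (equivalently, modulo a prime $p > 5$). I would bridge this gap representation-theoretically. All the multilinear spaces appearing in degree $5$ are modules over the group algebra $\mathbb{F} S_5$, and because $|S_5| = 120 = 2^3 \cdot 3 \cdot 5$, Maschke's theorem makes $\mathbb{F} S_5$ semisimple exactly when $\mathrm{char}\,\mathbb{F} \notin \{2,3,5\}$; under this hypothesis the ranks and nullspace dimensions coincide with those computed over $\mathbb{Q}$, so no pivot collapses under reduction and no spurious relation is introduced, and the generating identities remain valid. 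In fact, since each of the eight identities has integer coefficients, expanding both sides in Loday's normal-form basis of the free dialgebra shows that a fixed tuple of integers must vanish; this is field-independent, so mere satisfaction of \eqref{JTD1}--\eqref{JTD8} already persists whenever $\mathrm{char}\,\mathbb{F} \ne 2$, and the primes $3$ and $5$ enter only through the blanket appeal to the degree-$5$ classification. With the identities in hand the conclusion is immediate: they pass from the diproducts to the closed subspace $D$, so $D$ is a Jordan triple disystem with respect to $(\cdots)_1$ and $(\cdots)_2$.
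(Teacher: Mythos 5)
Your proposal is correct and follows essentially the same route as the paper, which states Theorem \ref{associativedialgebratheorem} without a separate proof, as an immediate consequence of Lemma \ref{operation2degree3}, Theorem \ref{identities Maple} and the remark establishing the equivalence of those identities with the ones in Theorem \ref{KPidentities}. Your additional observation that satisfaction of the integer-coefficient identities by the diproducts is characteristic-independent, so that the restriction to characteristic $\ne 2,3,5$ is only needed for the completeness of the degree-$5$ classification, is a correct refinement of the paper's brief remark about modular arithmetic.
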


We exclude characteristics 2, 3, 5 since our computer algebra methods require 
modular arithmetic with a prime greater than the degree of the identities.


\section{Conjecture: dialgebra operations and identities} \label{conjecturesection}

The results of Sections \ref{jordantripledisystems} and \ref{associativediproducts}
suggest a general conjecture relating multilinear operations in associative dialgebras
and their polynomial identities.

Fix a coefficient field $\mathbb{F}$ and an integer $n \ge 2$.
Let $\omega$ be a multilinear $n$-ary operation in an associative algebra over $\mathbb{F}$;
we can identify $\omega$ with an element of the group algebra $\mathbb{F} S_n$.
Fix a degree $d$ and consider the multilinear polynomial identities of degree $e \le d$
satisfied by $\omega$; we may assume that $d \equiv 1$ (mod $n{-}1$).
To be precise, let $A_e$ be the multilinear subspace of degree $e$ in
the free nonassociative $n$-ary algebra over $\mathbb{F}$ on $e$ generators.
Let $I_e$ be the subspace of $A_e$ consisting of those nonassociative $n$-ary polynomials
which vanish identically when the $n$-ary operation is replaced by $\omega$; 
thus $I_e$ is the kernel of the expansion map from $A_e$ to the group algebra $\mathbb{F} S_e$
regarded as the multilinear subspace of degree $e$ in the free associative algebra on $e$ generators.
By definition, the multilinear polynomial identities of degree $\le d$ satisfied by $\omega$ are
the direct sum
  \[
  I_d(\omega) = \bigoplus_{1 \le e \le d} I_e.
  \]
We apply the KP algorithm to the identities in $I_d(\omega)$ and obtain a set of multilinear
polynomial identities involving $n$ new $n$-ary operations.
To be precise, let $B_e$ be the multilinear subspace of degree $e$ in the free 
nonassociative multioperator algebra with $n$ operations of arity $n$.
Let KP$(I_e)$ be the subspace of $B_e$ obtained by applying the KP algorithm to the identities
in $I_e$, and define
  \[
  \mathrm{KP}_d(\omega) = \bigoplus_{1 \le e \le d} \mathrm{KP}(I_e).
  \]
We now consider a different path to the same destination.
We apply the BSO algorithm to $\omega$ and obtain $n$ multilinear operations
$\omega_1, \dots, \omega_n$ of arity $n$ in an associative dialgebra.
Consider the multilinear polynomial identities of degree $e \le d$ satisfied by
$\omega_1, \dots, \omega_n$.
To be precise, let $J_e$ be the subspace of $B_e$ consisting of those nonassociative 
polynomials in the $n$ operations which vanish identically when the $n$ operations 
are replaced by $\omega_1, \dots, \omega_n$; 
thus $J_e$ is the kernel of the expansion map from $B_e$ to the direct sum of $e$ copies of
the group algebra $\mathbb{F} S_e$
regarded as the multilinear subspace of degree $e$ in the free associative dialgebra on $e$ generators.
We define
  \[
  J_d(\omega_1,\dots,\omega_n) = \bigoplus_{1 \le e \le d} J_e.
  \]

\begin{conjecture} \label{conjecturestatement}
If the field $\mathbb{F}$ has characteristic 0 or $p > d$ then
  \[
  \mathrm{KP}_d(\omega) = J_d(\omega_1,\dots,\omega_n).
  \]
\end{conjecture}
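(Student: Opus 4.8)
The conjecture asserts an equality of two subspaces of $\bigoplus_{1 \le e \le d} B_e$: the KP output $\mathrm{KP}_d(\omega)$ and the identity space $J_d(\omega_1,\dots,\omega_n)$. The natural strategy is to prove the two inclusions separately, since they encode quite different phenomena. The inclusion $\mathrm{KP}_d(\omega) \subseteq J_d(\omega_1,\dots,\omega_n)$ should be the \emph{soundness} direction: every identity produced by the KP algorithm from a true identity of $\omega$ actually holds for the BSO operations $\omega_1,\dots,\omega_n$ in an associative dialgebra. The reverse inclusion $J_d(\omega_1,\dots,\omega_n) \subseteq \mathrm{KP}_d(\omega)$ is the \emph{completeness} direction: the BSO operations satisfy no identities beyond those forced by the KP algorithm. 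I expect completeness to be the genuinely hard part.

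**Soundness.** For the first inclusion I would argue at the level of a single monomial and extend by linearity, mirroring the structure of the KP algorithm's own definition. Given a multilinear identity $\sum x_\sigma \overline{a_{\sigma(1)}\cdots a_{\sigma(e)}} \in I_e$ for $\omega$, the KP algorithm produces, for each central index $i$, a dialgebra identity in the $n$ operations. The key is the normal-form description of dialgebra monomials (Loday's lemma): a dialgebra monomial is determined by the order of its factors and the position of its center. The BSO operation $\widehat{\omega}_i$ is defined precisely by taking $\omega$ and placing the hat on $a_i$; composing BSO operations and then expanding via the bar and associativity identities of a dialgebra should reproduce, monomial-by-monomial, exactly the substitution rule in Part 1 of the KP algorithm (the $j$-th argument dictates the $j$-th operation, with left/right occurrences forcing the first/last operation). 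The Part 2 interchangeability identities should follow from the inner and bar identities, which say that the center of a composite monomial is insensitive to which of the $n$ operations appears in a non-central argument. I would make this precise by defining an explicit map from KP monomials to dialgebra monomials via the center function $c(x)$ and checking that the expansion map annihilates each KP-generated element.

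**Completeness and the main obstacle.** The reverse inclusion is where the difficulty concentrates. One must show that the kernel $J_e$ of the dialgebra expansion map $B_e \to (\mathbb{F} S_e)^{\oplus e}$ contains \emph{nothing new} beyond $\mathrm{KP}(I_e)$. The natural approach is a dimension count combined with a rank argument: one shows $\dim J_e = \dim \mathrm{KP}(I_e)$, so that the soundness inclusion forces equality. The plan would be to relate $\dim J_e$ to $\dim I_e$ through the combinatorics of the hat notation. Since each associative dialgebra monomial on $e$ generators corresponds to an associative monomial together with a choice of center ($e$ choices), and the BSO operations are built by distributing the center, I would try to establish a bijective or rank-preserving correspondence between the expansion of $\omega_1,\dots,\omega_n$ and $e$ ``shifted copies'' of the expansion of $\omega$ itself. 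Concretely, the rank of the BSO expansion matrix should factor through the rank of the original $\omega$ expansion matrix, argument-position by argument-position. The hard part will be controlling this rank globally across all association types of degree $e$ simultaneously, because the Part 2 identities entangle the different operations in the non-central slots; a clean proof likely requires organizing $B_e$ as an $S_e$-module and tracking the expansion map representation-theoretically, which is why the hypothesis $\operatorname{char}\mathbb{F} = 0$ or $p > d$ appears (it guarantees semisimplicity of $\mathbb{F} S_e$ for all $e \le d$, so that rank equalities verified by the isotypic decomposition transfer faithfully). I would expect the cleanest route to be an inductive one on the center position, reducing the dialgebra computation to the ordinary associative computation for $\omega$ one center at a time, but verifying that the induction closes—that no spurious relations are introduced when gluing the pieces—is the crux on which the whole conjecture rests.
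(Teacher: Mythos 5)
The statement you are trying to prove is labelled a \emph{conjecture} in the paper, and the authors do not prove it: the only evidence they offer is the computational verification, for the single case where $\omega$ is the Jordan triple product $abc+cba$ and $d=5$, that the output of the KP algorithm (Theorem \ref{KPidentities}) coincides with the identities satisfied by the BSO operations (Theorem \ref{identities Maple}). So there is no proof in the paper to compare yours against, and the relevant question is whether your proposal actually closes the problem. It does not. Your soundness direction ($\mathrm{KP}_d(\omega)\subseteq J_d(\omega_1,\dots,\omega_n)$) is a plausible and probably completable argument --- the normal-form lemma for dialgebra monomials does make the center-propagation rule of Part 1 match the hat-placement of the BSO expansion, and Part 2 should follow from the bar identities --- but you only gesture at the verification rather than carrying it out, and the bookkeeping for nested occurrences of the $n$ operations (where the central argument of an inner monomial need not be the central argument of the outer one) is exactly where such an argument needs to be checked, not asserted.

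The completeness direction is a genuine gap, and you say so yourself: you end by observing that ``verifying that the induction closes \dots is the crux on which the whole conjecture rests.'' Identifying the crux is not the same as resolving it. The dimension count $\dim J_e = \dim \mathrm{KP}(I_e)$ is precisely what is unknown in general; \emph{a priori} the BSO operations could satisfy extra identities in some degree that are not consequences of KP-generated ones (this is the analogue, for dialgebras, of the phenomenon of special identities for Jordan algebras, which the authors cite in the related context of quasi-Jordan algebras). Your proposed reduction ``argument-position by argument-position'' does not obviously respect the Part 2 interchangeability relations, which quotient $B_e$ in a way that mixes the $n$ operations, and the semisimplicity hypothesis guarantees only that ranks are well behaved under the $S_e$-action, not that the two kernels coincide. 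As written, your proposal is a reasonable research plan for attacking an open problem, not a proof of the statement.
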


Expressed less formally, the conjecture says that the following two processes produce the same results when
the group algebra $\mathbb{F} S_d$ is semisimple:
  \begin{itemize}
  \item
  Find the identities satisfied by $\omega$, and apply the KP algorithm.
  \item
  Apply the BSO algorithm, and find the identities satisfied by $\omega_1,\dots,\omega_n$.
  \end{itemize}
The conjecture is equivalent to the commutativity of the diagram in Figure \ref{conjecturediagram}.

\begin{figure}
\[
\begin{CD} 
\omega @>\text{BSO}>> \omega_1,\dots,\omega_n 
\\ 
@VVV @VVV 
\\ 
I_d(\omega) @>\text{KP}>> 
\begin{array}{c}
J_d(\omega_1,\dots,\omega_n)
\\
\stackrel{?}{=} \; \mathrm{KP}_d(\omega) 
\\
\end{array}
\end{CD}
\]
\caption{Diagrammatic formulation of Conjecture \ref{conjecturestatement}}
\label{conjecturediagram}
\end{figure}


\section{Jordan triple diproducts in a Jordan dialgebra} \label{Jordandialgebrasection}

Every Jordan algebra $J$ with operation $a \circ b$ gives rise to a Jordan triple system 
by considering the same underlying vector space with respect to the triple product 
  \begin{equation} \label{triple Jordan product}
  \langle a,b,c \rangle 
  = 
  (a \circ b) \circ c - (a \circ c) \circ b + a \circ (b \circ c). 
  \end{equation}
In this section we consider a Jordan dialgebra $D$ with operation $ab$ 
and the two trilinear operations which give rise to 
the structure of a Jordan triple disystem on the same underlying vector space.

The first trilinear operation $\langle \cdots \rangle_1$ can be obtained 
by replacing the Jordan algebra product $a \circ b$ in \eqref{triple Jordan product}
by the the Jordan dialgebra product $ab$ in $D$: 
  \begin{equation} \label{diop1}
  \langle a,b,c \rangle_1 = (ab)c - (ac)b + a(bc).
  \end{equation}
The second trilinear operation $\langle \cdots \rangle_2$ can be obtained from the first by 
transposing $a$ and $b$ and changing the signs of the second and third terms:
  \begin{equation} \label{diop2}
  \langle a,b,c \rangle_2 = (ba)c + (bc)a - b(ac).
  \end{equation}
It is straightforward to verify that in a special Jordan dialgebra, 
where the product is $ab = a \dashv b + b \vdash a$,
these operations reduce to twice the first and second Jordan diproducts in an associative dialgebra:
  \begin{align*}
  &\quad
  ( a \dashv b + b \vdash a ) \dashv c 
  + c \vdash ( a \dashv b + b \vdash a )
  \\
  &\quad
  - ( a \dashv c + c \vdash a ) \dashv b 
  - b \vdash ( a \dashv c + c \vdash a )
  \\
  &\quad
  + a \dashv ( b \dashv c + c \vdash b ) 
  + ( b \dashv c + c \vdash b ) \vdash a
  \\
  &=
  \widehat{a}bc + b\widehat{a}c + c\widehat{a}b + cb\widehat{a}
  - \widehat{a}cb - c\widehat{a}b - b\widehat{a}c - bc\widehat{a}
  + \widehat{a}bc + \widehat{a}cb + bc\widehat{a} + cb\widehat{a}
  \\
  &=
  2 \big( \widehat{a}bc + cb\widehat{a} \big),
  \\
  &\quad
  ( b \dashv a + a \vdash b ) \dashv c 
  + c \vdash ( b \dashv a + a \vdash b )
  \\
  &\quad
  + ( b \dashv c + c \vdash b ) \dashv a 
  + a \vdash ( b \dashv c + c \vdash b )
  \\
  &\quad
  - b \dashv ( a \dashv c + c \vdash a ) 
  - ( a \dashv c + c \vdash a ) \vdash b
  \\
  &=
  \widehat{b}ac + a\widehat{b}c + c\widehat{b}a + ca\widehat{b}
  + \widehat{b}ca + c\widehat{b}a + a\widehat{b}c + ac\widehat{b}
  - \widehat{b}ac - \widehat{b}ca - ac\widehat{b} - ca\widehat{b}
  \\
  &=
  2 \big( a\widehat{b}c + c\widehat{b}a \big).
  \end{align*}

\begin{lemma} \label{degree3-Jordan dialgebra}
In a Jordan dialgebra $D$ with operation $ab$, 
every polynomial identity of degree 3 satisfied by 
$\langle a,b,c \rangle_1$ and $\langle a,b,c \rangle_2$
is a consequence of the symmetry of $\langle a,b,c \rangle_2$ in its first and third arguments: 
$\langle a,b,c \rangle_2 \equiv \langle c,b,a \rangle_2$.
\end{lemma}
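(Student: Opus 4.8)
The plan is to reduce the lemma to a finite rank computation, exactly parallel to the associative case treated in Lemma \ref{operation2degree3}. A multilinear polynomial identity of degree $3$ in the two operations is a linear combination of the twelve diproduct monomials obtained by applying $\langle\cdots\rangle_1$ and $\langle\cdots\rangle_2$ to the six permutations of $a,b,c$; since each operation is itself trilinear, no nested products occur. First I would expand each of these twelve monomials, using the definitions \eqref{diop1} and \eqref{diop2}, into the multilinear degree-$3$ component of the free (right) Jordan dialgebra.

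The key preliminary step is to pin down a basis for that component. The free nonassociative binary algebra has a $12$-dimensional multilinear degree-$3$ part, spanned by the two association types $(xy)z$ and $x(yz)$ over the six permutations of $a,b,c$. Among the defining identities of a Jordan dialgebra (Definition \ref{JDdefinition}), only right commutativity has degree $3$; the Osborn identity and the right Jordan identity, together with their linearizations, have degree $4$ and therefore impose no relation in degree $3$. Right commutativity $x(yz)\equiv x(zy)$ collapses the six monomials of type $x(yz)$ to three, so the multilinear degree-$3$ component of the free Jordan dialgebra is $9$-dimensional, with basis the six monomials $(ab)c,(ac)b,(ba)c,(bc)a,(ca)b,(cb)a$ together with the three representatives $a(bc),b(ac),c(ab)$.

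With this basis fixed I would form the $9\times 12$ matrix $E$ whose $(i,j)$ entry is the coefficient of the $i$-th basis monomial in the expansion of the $j$-th diproduct monomial, reducing every $x(yz)$-type term to its chosen representative via right commutativity. The coefficient vectors of the identities are the nullspace of $E$. I expect $\mathrm{rank}(E)=9$, so that the nullspace has dimension $3$; the computation in fact decouples into three identical $3\times 3$ blocks, grouped by the outer variable of the $(xy)z$-type monomials, each of full rank, which makes the count transparent. It then remains to exhibit three independent identities in the nullspace that are consequences of $\langle a,b,c\rangle_2\equiv\langle c,b,a\rangle_2$: permuting the arguments of this symmetry in all possible ways yields the three distinct relations $\langle a,b,c\rangle_2\equiv\langle c,b,a\rangle_2$, $\langle a,c,b\rangle_2\equiv\langle b,c,a\rangle_2$ and $\langle b,a,c\rangle_2\equiv\langle c,a,b\rangle_2$, each of which lies in the nullspace precisely because of right commutativity. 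Since these three are linearly independent, they span the nullspace and the lemma follows.

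The main obstacle I anticipate is not the rank computation, which is routine and mirrors Lemma \ref{operation2degree3}, but the bookkeeping that precedes it: one must correctly identify right commutativity as the unique degree-$3$ relation, choose a consistent set of representatives for the $x(yz)$ type, and then expand the twelve diproducts while tracking the signs coming from the alternating middle terms of \eqref{diop1} and \eqref{diop2}. Once the nine-dimensional model is correctly established, verifying that $E$ has full row rank and that its kernel is spanned by the symmetry consequences is a short and direct calculation.
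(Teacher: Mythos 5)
Your proposal is correct and follows essentially the same route as the paper: both reduce the lemma to a nullspace computation for the expansion map of the twelve diproduct monomials into the multilinear degree-$3$ component of the free Jordan dialgebra, where right commutativity is indeed the only relation, and both identify the $3$-dimensional kernel with the permutations of $\langle a,b,c\rangle_2\equiv\langle c,b,a\rangle_2$. The only difference is organizational --- you pass explicitly to the $9$-dimensional quotient and observe the decoupling into three full-rank $3\times 3$ blocks, while the paper keeps the six right-commutativity relations as extra rows of an $18\times 24$ matrix and reads the identities off the row canonical form --- and your block decomposition and rank count check out.
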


\begin{proof}
We use computer algebra.  We construct an $18 \times 24$ matrix $E$ in which
  \begin{itemize}
  \item the upper left $6 \times 12$ block contains the right commutative identities
  \item the lower left $12 \times 12$ block contains the expansions of 
  $\langle \cdots \rangle_1$ and $\langle \cdots \rangle_2$
  \item the lower right $12 \times 12$ block contains the identity matrix
  \item the upper right $6 \times 12$ block contains the zero matrix
  \end{itemize}
More precisely, columns 1--12 of the matrix correspond to the 12 multilinear monomials of degree 3
in the free nonassociative algebra,
  \[
  (ab)c, \, (ac)b, \, (ba)c, \, (bc)a, \, (ca)b, \, (cb)a, \,
  a(bc), \, a(cb), \, b(ac), \, b(ca), \, c(ab), \, c(ba),
  \]
and columns 13--24 correspond to the 12 multilinear monomials of degree 3 in the trilinear operations
$\langle \cdots \rangle_1$ and $\langle \cdots \rangle_2$,
  \begin{align*}
  &
  \langle a,b,c \rangle_1, \, 
  \langle a,c,b \rangle_1, \, 
  \langle b,a,c \rangle_1, \, 
  \langle b,c,a \rangle_1, \, 
  \langle c,a,b \rangle_1, \, 
  \langle c,b,a \rangle_1,
  \\
  &
  \langle a,b,c \rangle_1, \, 
  \langle a,c,b \rangle_1, \, 
  \langle b,a,c \rangle_1, \, 
  \langle b,c,a \rangle_1, \, 
  \langle c,a,b \rangle_1, \, 
  \langle c,b,a \rangle_1.
  \end{align*}
There are six permutations of the right-commutative identity:
  \[
  a(bc) {-} a(cb), \,
  a(cb) {-} a(bc), \,
  b(ac) {-} b(ca), \,
  b(ca) {-} b(ac), \,
  c(ab) {-} c(ba), \,
  c(ba) {-} c(ab).
  \]
Entry $(i,j)$ of the upper left block is the coefficient of the $j$-th nonassociative monomial
in the $i$-th permutation of the right-commutative identity.
Entry $(12{+}i,j)$ of the lower left block contains is the coefficient of the $j$-th nonassociative monomial
in the expansion of the $i$-th trilinear monomial using equations \eqref{diop1} and \eqref{diop2}.
This matrix is displayed in Figure \ref{degree3matrix}, using the symbols $\cdot, +, -$ for $0, 1, -1$.
The rank is 15, and the row canonical form is displayed in Figure \ref{degree3matrixrcf},
using the symbol $\ast$ for $\tfrac12$.
The dividing line between the upper and lower parts of the matrix lies immediately above row 13,
since that is the uppermost row whose leading 1 is in the right part of the matrix.
These rows represent the dependence relations among the expansions of the trilinear monomials
which hold as a result of the right-commutative identities.
The rows of the lower right $3 \times 12$ block of the row canonical form represent the 
permutations of the identity
$\langle a,b,c \rangle_2 - \langle c,b,a \rangle_2 \equiv 0$.
\end{proof}

  \begin{figure} \tiny
  \[
  \left[
  \begin{array}{cccccccccccc|cccccccccccc}
  . &\nn . &\nn . &\nn . &\nn . &\nn . &\nn + &\nn - &\nn . &\nn . &\nn . &\nn . &
  . &\nn . &\nn . &\nn . &\nn . &\nn . &\nn . &\nn . &\nn . &\nn . &\nn . &\nn . \\
  . &\nn . &\nn . &\nn . &\nn . &\nn . &\nn - &\nn + &\nn . &\nn . &\nn . &\nn . &
  . &\nn . &\nn . &\nn . &\nn . &\nn . &\nn . &\nn . &\nn . &\nn . &\nn . &\nn . \\
  . &\nn . &\nn . &\nn . &\nn . &\nn . &\nn . &\nn . &\nn + &\nn - &\nn . &\nn . &
  . &\nn . &\nn . &\nn . &\nn . &\nn . &\nn . &\nn . &\nn . &\nn . &\nn . &\nn . \\
  . &\nn . &\nn . &\nn . &\nn . &\nn . &\nn . &\nn . &\nn - &\nn + &\nn . &\nn . &
  . &\nn . &\nn . &\nn . &\nn . &\nn . &\nn . &\nn . &\nn . &\nn . &\nn . &\nn . \\
  . &\nn . &\nn . &\nn . &\nn . &\nn . &\nn . &\nn . &\nn . &\nn . &\nn + &\nn - &
  . &\nn . &\nn . &\nn . &\nn . &\nn . &\nn . &\nn . &\nn . &\nn . &\nn . &\nn . \\
  . &\nn . &\nn . &\nn . &\nn . &\nn . &\nn . &\nn . &\nn . &\nn . &\nn - &\nn + &
  . &\nn . &\nn . &\nn . &\nn . &\nn . &\nn . &\nn . &\nn . &\nn . &\nn . &\nn . \\
  \hline
  + &\nn - &\nn . &\nn . &\nn . &\nn . &\nn + &\nn . &\nn . &\nn . &\nn . &\nn . &
  + &\nn . &\nn . &\nn . &\nn . &\nn . &\nn . &\nn . &\nn . &\nn . &\nn . &\nn . \\
  - &\nn + &\nn . &\nn . &\nn . &\nn . &\nn . &\nn + &\nn . &\nn . &\nn . &\nn . &
  . &\nn + &\nn . &\nn . &\nn . &\nn . &\nn . &\nn . &\nn . &\nn . &\nn . &\nn . \\
  . &\nn . &\nn + &\nn - &\nn . &\nn . &\nn . &\nn . &\nn + &\nn . &\nn . &\nn . &
  . &\nn . &\nn + &\nn . &\nn . &\nn . &\nn . &\nn . &\nn . &\nn . &\nn . &\nn . \\
  . &\nn . &\nn - &\nn + &\nn . &\nn . &\nn . &\nn . &\nn . &\nn + &\nn . &\nn . &
  . &\nn . &\nn . &\nn + &\nn . &\nn . &\nn . &\nn . &\nn . &\nn . &\nn . &\nn . \\
  . &\nn . &\nn . &\nn . &\nn + &\nn - &\nn . &\nn . &\nn . &\nn . &\nn + &\nn . &
  . &\nn . &\nn . &\nn . &\nn + &\nn . &\nn . &\nn . &\nn . &\nn . &\nn . &\nn . \\
  . &\nn . &\nn . &\nn . &\nn - &\nn + &\nn . &\nn . &\nn . &\nn . &\nn . &\nn + &
  . &\nn . &\nn . &\nn . &\nn . &\nn + &\nn . &\nn . &\nn . &\nn . &\nn . &\nn . \\
  . &\nn . &\nn + &\nn + &\nn . &\nn . &\nn . &\nn . &\nn - &\nn . &\nn . &\nn . &
  . &\nn . &\nn . &\nn . &\nn . &\nn . &\nn + &\nn . &\nn . &\nn . &\nn . &\nn . \\
  . &\nn . &\nn . &\nn . &\nn + &\nn + &\nn . &\nn . &\nn . &\nn . &\nn - &\nn . &
  . &\nn . &\nn . &\nn . &\nn . &\nn . &\nn . &\nn + &\nn . &\nn . &\nn . &\nn . \\
  + &\nn + &\nn . &\nn . &\nn . &\nn . &\nn - &\nn . &\nn . &\nn . &\nn . &\nn . &
  . &\nn . &\nn . &\nn . &\nn . &\nn . &\nn . &\nn . &\nn + &\nn . &\nn . &\nn . \\
  . &\nn . &\nn . &\nn . &\nn + &\nn + &\nn . &\nn . &\nn . &\nn . &\nn . &\nn - &
  . &\nn . &\nn . &\nn . &\nn . &\nn . &\nn . &\nn . &\nn . &\nn + &\nn . &\nn . \\
  + &\nn + &\nn . &\nn . &\nn . &\nn . &\nn . &\nn - &\nn . &\nn . &\nn . &\nn . &
  . &\nn . &\nn . &\nn . &\nn . &\nn . &\nn . &\nn . &\nn . &\nn . &\nn + &\nn . \\
  . &\nn . &\nn + &\nn + &\nn . &\nn . &\nn . &\nn . &\nn . &\nn - &\nn . &\nn . &
  . &\nn . &\nn . &\nn . &\nn . &\nn . &\nn . &\nn . &\nn . &\nn . &\nn . &\nn +
  \end{array}
  \right]
  \]
  \caption{The $18 \times 24$ matrix for the proof of Lemma \ref{degree3-Jordan dialgebra}}
  \label{degree3matrix}
  \[
  \left[
  \begin{array}{cccccccccccc|cccccccccccc}
  + &\nn . &\nn . &\nn . &\nn . &\nn . &\nn . &\nn . &\nn . &\nn . &\nn . &\nn . &
  \ast &\nn . &\nn . &\nn . &\nn . &\nn . &\nn . &\nn . &\nn . &\nn . &\nn \ast &\nn . \\
  . &\nn + &\nn . &\nn . &\nn . &\nn . &\nn . &\nn . &\nn . &\nn . &\nn . &\nn . &
  . &\nn \ast &\nn . &\nn . &\nn . &\nn . &\nn . &\nn . &\nn . &\nn . &\nn \ast &\nn . \\
  . &\nn . &\nn + &\nn . &\nn . &\nn . &\nn . &\nn . &\nn . &\nn . &\nn . &\nn . &
  . &\nn . &\nn \ast &\nn . &\nn . &\nn . &\nn . &\nn . &\nn . &\nn . &\nn . &\nn \ast \\
  . &\nn . &\nn . &\nn + &\nn . &\nn . &\nn . &\nn . &\nn . &\nn . &\nn . &\nn . &
  . &\nn . &\nn . &\nn \ast &\nn . &\nn . &\nn . &\nn . &\nn . &\nn . &\nn . &\nn \ast \\
  . &\nn . &\nn . &\nn . &\nn + &\nn . &\nn . &\nn . &\nn . &\nn . &\nn . &\nn . &
  . &\nn . &\nn . &\nn . &\nn \ast &\nn . &\nn . &\nn . &\nn . &\nn \ast &\nn . &\nn . \\
  . &\nn . &\nn . &\nn . &\nn . &\nn + &\nn . &\nn . &\nn . &\nn . &\nn . &\nn . &
  . &\nn . &\nn . &\nn . &\nn . &\nn \ast &\nn . &\nn . &\nn . &\nn \ast &\nn . &\nn . \\
  . &\nn . &\nn . &\nn . &\nn . &\nn . &\nn + &\nn . &\nn . &\nn . &\nn . &\nn . &
  \ast &\nn \ast &\nn . &\nn . &\nn . &\nn . &\nn . &\nn . &\nn . &\nn . &\nn . &\nn . \\
  . &\nn . &\nn . &\nn . &\nn . &\nn . &\nn . &\nn + &\nn . &\nn . &\nn . &\nn . &
  \ast &\nn \ast &\nn . &\nn . &\nn . &\nn . &\nn . &\nn . &\nn . &\nn . &\nn . &\nn . \\
  . &\nn . &\nn . &\nn . &\nn . &\nn . &\nn . &\nn . &\nn + &\nn . &\nn . &\nn . &
  . &\nn . &\nn \ast &\nn \ast &\nn . &\nn . &\nn . &\nn . &\nn . &\nn . &\nn . &\nn . \\
  . &\nn . &\nn . &\nn . &\nn . &\nn . &\nn . &\nn . &\nn . &\nn + &\nn . &\nn . &
  . &\nn . &\nn \ast &\nn \ast &\nn . &\nn . &\nn . &\nn . &\nn . &\nn . &\nn . &\nn . \\
  . &\nn . &\nn . &\nn . &\nn . &\nn . &\nn . &\nn . &\nn . &\nn . &\nn + &\nn . &
  . &\nn . &\nn . &\nn . &\nn \ast &\nn \ast &\nn . &\nn . &\nn . &\nn . &\nn . &\nn . \\
  . &\nn . &\nn . &\nn . &\nn . &\nn . &\nn . &\nn . &\nn . &\nn . &\nn . &\nn + &
  . &\nn . &\nn . &\nn . &\nn \ast &\nn \ast &\nn . &\nn . &\nn . &\nn . &\nn . &\nn . \\
  \hline
  . &\nn . &\nn . &\nn . &\nn . &\nn . &\nn . &\nn . &\nn . &\nn . &\nn . &\nn . &
  . &\nn . &\nn . &\nn . &\nn . &\nn . &\nn + &\nn . &\nn . &\nn . &\nn . &\nn - \\
  . &\nn . &\nn . &\nn . &\nn . &\nn . &\nn . &\nn . &\nn . &\nn . &\nn . &\nn . &
  . &\nn . &\nn . &\nn . &\nn . &\nn . &\nn . &\nn + &\nn . &\nn - &\nn . &\nn . \\
  . &\nn . &\nn . &\nn . &\nn . &\nn . &\nn . &\nn . &\nn . &\nn . &\nn . &\nn . &
  . &\nn . &\nn . &\nn . &\nn . &\nn . &\nn . &\nn . &\nn + &\nn . &\nn - &\nn .
  \end{array}
  \right]
  \]
  \caption{The row canonical form of the matrix of Figure \ref{degree3matrix}}
  \label{degree3matrixrcf}
  \end{figure}
  
\begin{theorem} \label{degree5-Jordan dialgebra}
In a Jordan dialgebra $D$ with operation $ab$, 
every polynomial identity of degree 5 satisfied by 
$\langle a,b,c \rangle_1$ and $\langle a,b,c \rangle_2$
is a consequence of the identity of Lemma \ref{degree3-Jordan dialgebra}
together with these seven independent identities:
\begin{align*}
&
\langle a, \langle b,c,d \rangle_1, e \rangle_1 - \langle a, \langle d,c,b \rangle_2, e \rangle_1 
\equiv 0,
\\
&
\langle a, b, \langle c,d,e \rangle_1 \rangle_1 - \langle a, b, \langle e,d,c\rangle_2 \rangle_1 
\equiv 0, 
\\
&
\langle \langle a,b,c \rangle_1, d, e \rangle_2 - \langle \langle c,b,a \rangle_2, d, e \rangle_2 
\equiv 0, 
\\
&
\langle \langle a,b,c \rangle_2, d, e \rangle_1 - \langle \langle e,d,a \rangle_2, b, c \rangle_2 
+ 
\langle a, \langle b,e,d \rangle_1, c \rangle_2 - \langle \langle e,d,c \rangle_2, b, a \rangle_2 
\equiv 0,
\\
&
\langle \langle a,b,c \rangle_2, d, e \rangle_1 - \langle a, \langle b,c,d \rangle_1, e \rangle_2 
- 
\langle e, \langle b,a,d \rangle_1, c \rangle_2 + \langle \langle a,d,c \rangle_2, b, e \rangle_2 
\equiv 0,
\\
&
\langle \langle a,b,c \rangle_1, d, e \rangle_1 + \langle \langle a,b,e \rangle_1, d, c \rangle_1 
- 
\langle a, b, \langle c,d,e \rangle_2 \rangle_1 - \langle c, \langle d,a,b \rangle_2, e \rangle_2 
\equiv 0, 
\\
&
\langle \langle a,b,c \rangle_1, d, e \rangle_1 + \langle \langle a,d,c \rangle_1, b, e \rangle_1 
- 
\langle a, \langle b,c,d \rangle_2, e \rangle_1 - \langle e, \langle b,a,d \rangle_2, c \rangle_2 
\equiv 0.
\end{align*}
\end{theorem}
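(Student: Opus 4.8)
The plan is to reproduce the computational classification of Section \ref{associativediproducts}, but now carried out inside the free \emph{Jordan} dialgebra rather than the free associative dialgebra, using the stacking-and-row-reduction technique already employed in the proof of Lemma \ref{degree3-Jordan dialgebra}. Since Lemma \ref{degree3-Jordan dialgebra} shows that $\langle a,b,c\rangle_1$ satisfies no identity of degree $3$ and that the only degree-$3$ identity for $\langle a,b,c\rangle_2$ is its symmetry $\langle a,b,c\rangle_2\equiv\langle c,b,a\rangle_2$, the enumeration of association types in degree $5$ is exactly parallel to Section \ref{associativediproducts}: we have the same ten association types built from the two operations $\langle\cdots\rangle_1$ and $\langle\cdots\rangle_2$, and after imposing the symmetry of Lemma \ref{operation2degree3}'s analogue for $\langle\cdots\rangle_2$ these contribute the $690$ multilinear diproduct monomials tabulated there.

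First I would fix generators $a,b,c,d,e$ and span the multilinear degree-$5$ part of the free nonassociative algebra on the single product $ab$: there are $C_4=14$ binary association types, hence $14\cdot 5!=1680$ nonassociative monomials. The upper block of the large matrix $E$ is then filled with every multilinear degree-$5$ consequence of the three defining identities of a Jordan dialgebra (Definition \ref{JDdefinition}): right commutativity (degree $3$), the Osborn identity (degree $4$), and the right Jordan identity (degree $4$). Each must be fully linearized and embedded into degree $5$ in all admissible ways, by multiplying with the remaining generators, substituting into each argument slot, and applying all permutations of $a,b,c,d,e$, so that the row space of the upper block is precisely the degree-$5$ component of the T-ideal defining the variety.

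For the lower block I would expand each of the $690$ diproduct monomials via \eqref{diop1} and \eqref{diop2} into combinations of the $1680$ nonassociative monomials, recording a $1$ in an accompanying tracking block, exactly as the identity matrix appears in the right part of the lower block of Figure \ref{degree3matrix}. Row-reducing $E$ over a field of characteristic $p>5$, so that $\mathbb{F}S_5$ is semisimple and the ranks agree with the characteristic-$0$ answer, the rows whose pivot lies in the tracking block encode the polynomial identities satisfied by $\langle\cdots\rangle_1$ and $\langle\cdots\rangle_2$ modulo the Jordan dialgebra relations. From a canonical basis of this identity space I would extract generators by the incremental $S_5$-module procedure used for Proposition \ref{operation1degree5}: sort by number of nonzero components, apply all permutations of $a,b,c,d,e$, and retain a vector each time the rank increases. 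Finally I would confirm that the seven displayed identities, together with the symmetry of Lemma \ref{degree3-Jordan dialgebra}, span the full identity space and that deleting any one lowers the rank, giving both generation and independence.

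The main obstacle will be the upper block, not the lower one. In the associative dialgebra computations Loday's normal-form lemma supplied a clean monomial basis for the free dialgebra, whereas here no such basis exists: the degree-$5$ component of the free Jordan dialgebra is accessible only as the quotient of the $1680$-dimensional nonassociative space by the degree-$5$ T-ideal. Ensuring that the upper block captures \emph{every} consequence of right commutativity, the Osborn identity, and the right Jordan identity, rather than only the obvious substitutions, is delicate, since an incomplete block would fabricate spurious identities among the diproducts; I would guard against this by verifying that the computed quotient dimension is stable under enlarging the generating set of consequences. Once the quotient is correct, the remaining steps are large but routine, handled by modular Gaussian elimination as in Section \ref{associativediproducts}.
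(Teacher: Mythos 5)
Your overall strategy---stacking the degree-$5$ multilinear consequences of the Jordan dialgebra identities above the expansions of the diproduct monomials, appending an identity block to track relations among the latter, row reducing, and then extracting a generating set by the incremental $S_5$-module procedure---is exactly the computation the paper performs, so in outline the proposal is sound. There is, however, one genuine flaw in the setup: you import the figure of $690$ diproduct monomials from Section \ref{associativediproducts}, but that count was obtained by imposing not only the symmetry of $\langle\cdots\rangle_2$ in its first and third arguments but also the degree-$5$ symmetries of $(\cdots)_1$ from Proposition \ref{operation1degree5} (the $b\leftrightarrow d$ symmetry of $(a,(b,c,d)_1,e)_1$ and the $c\leftrightarrow e$ symmetry of $(a,b,(c,d,e)_1)_1$). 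Those are identities of the diproducts in an \emph{associative} dialgebra; for the operations \eqref{diop1} and \eqref{diop2} in an arbitrary Jordan dialgebra they are not yet known, and indeed they are part of what this theorem establishes (they follow from the first two listed identities combined with the symmetry of $\langle\cdots\rangle_2$). Building them into the enumeration is circular: if they failed, the columns of your matrix would not even be well defined, and since you have assumed them, your computation could never detect or certify them. With only the degree-$3$ symmetry of Lemma \ref{degree3-Jordan dialgebra} imposed, the correct count is $120+120+120+60+60+60+120+60+60+30=810$, and this is the size the lower blocks must have.

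Your concern about the completeness of the upper block is legitimate but is settled by a standard fact rather than by the stability heuristic you propose: the multilinear consequences in degree $n+1$ of a multilinear identity $I(a_1,\dots,a_n)$ are spanned by the $n$ substitutions $I(a_1,\dots,a_ia_{n+1},\dots,a_n)$ together with the two outer multiplications $I(a_1,\dots,a_n)a_{n+1}$ and $a_{n+1}I(a_1,\dots,a_n)$, followed by permutations of the variables. Iterating this from right commutativity (degree $3$) and the linearized Osborn and right Jordan identities (degree $4$) up to degree $5$ yields $30+12=42$ identities and hence, after the $5!$ permutations, a $5040\times 1680$ upper block whose row space is precisely the degree-$5$ multilinear part of the T-ideal of Definition \ref{JDdefinition}. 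With these two corrections the computation goes through as you describe and reproduces the paper's figures: total rank $2215$, a $560$-dimensional space of identities with leading terms in the diproduct block, and, after the rank-increment sweep and the elimination of one redundant generator, the seven identities of the statement.
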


\begin{proof}
The basic strategy is the same as in the proof of Lemma \ref{degree3-Jordan dialgebra}, 
but the matrix is much larger and some further computations are required.

There are 14 association types for a nonassociative binary operation of degree 5,
  \begin{align*}
  &
  (((ab)c)d)e, ((a(bc))d)e, ((ab)(cd))e, (a((bc)d))e, (a(b(cd)))e, ((ab)c)(de), (a(bc))(de),
  \\
  &
  (ab)((cd)e), (ab)(c(de)), a(((bc)d)e), a((b(cd))e), a((bc)(de)), a(b((cd)e)), a(b(c(de))).
  \end{align*}
and $5!$ permutations of the variables $a,b,c,d,e$, giving 1680 multilinear monomials,
which correspond to the columns in the left part of the matrix;
we order them first by association type and then by lexicographical order of the permutation.

We need to generate all the consequences of degree 5 of the defining identities for Jordan dialgebras.
A multilinear identity $I(a_1,\dots,a_n)$ of degree $n$ produces $n{+}2$ identities of degree $n{+}1$
(we have $n$ substitutions and two multiplications):
  \[
  I( a_1 a_{n+1}, \dots, a_n ),
  \;
  \dots,
  \;
  I( a_1, \dots, a_n a_{n+1} ),
  \;
  I( a_1, \dots, a_n ) a_{n+1},
  \;
  a_{n+1} I( a_1, \dots, a_n ).
  \]
The right-commutative identity of degree 3 produces 5 identities of degree 4, and each of these 
produces 6 identities of degree 5, for a total of 30.
The linearized versions of the two identities of degree 4 from Definition \ref{JDdefinition} each
produce 6 identities of degree 5, for a total of 12.
Altogether we have 42 identities of degree 5, and each allows $5!$ permutations of the variables, 
for a total of 5040 identities.

The last two paragraphs show that the upper left block has size $5040 \times 1680$;
its $(i,j)$ entry is the coefficient of the $j$-th nonassociative monomial in the $i$-th identity.

There are 10 association types for two trilinear operations of degree 5,
assuming that the second operation is symmetric in its first and third arguments:
\begin{center}
\begin{tabular}{rlrl}
 1: &\quad $\langle \langle a,b,c\rangle_1,d,e\rangle_1$ &\qquad
 6: &\quad $\langle \langle a,b,c\rangle_2,d,e\rangle_1$ \\
 2: &\quad $\langle a,\langle b,c,d\rangle_1,e\rangle_1$ &\qquad
 7: &\quad $\langle a,\langle b,c,d\rangle_2,e\rangle_1$ \\
 3: &\quad $\langle a,b,\langle c,d,e\rangle_1\rangle_1$ &\qquad
 8: &\quad $\langle a,b,\langle c,d,e\rangle_2\rangle_1$ \\
 4: &\quad $\langle \langle a,b,c\rangle_2,d,e\rangle_2$ &\qquad
 9: &\quad $\langle \langle a,b,c\rangle_1,d,e\rangle_2$ \\
 5: &\quad $\langle a,\langle b,c,d\rangle_2,e\rangle_2$ &\qquad
10: &\quad $\langle a,\langle b,c,d\rangle_1,e\rangle_2$
\end{tabular}
\end{center}
The symmetry of $\langle \cdots \rangle_2$ gives the number of multilinear monomials in each type:
  \[
  120 + 120 + 120 + 60 + 60 + 60 + 120 + 60 + 60 + 30 = 810;
  \]
these monomials correspond to the columns in the right part of the matrix.
The lower left block has size $810 \times 1680$; 
its $(i,j)$ entry is the coefficient of the $j$-th nonassociative monomial in the expansion, 
using equations \eqref{diop1} and \eqref{diop2},
of the $i$-th monomial in the operations 
$\langle \cdots \rangle_1$ and $\langle \cdots \rangle_2$.
The lower right block is the $810 \times 810$ identity matrix, and
the upper right block is the $5040 \times 810$ zero matrix.
(See Figure \ref{figureJDmatrix}.)
 
  \begin{figure}
  \[
  \left[
  \begin{array}{c|c}
  \begin{tabular}{l}
  consequences in \\
  degree 5 of the \\
  Jordan dialgebra \\
  identities from \\
  Definition \ref{JDdefinition}
  \end{tabular}
  &
  \begin{tabular}{l}
  zero matrix
  \end{tabular}
  \\
  \hline
  \begin{tabular}{l}
  expansions using \\
  \eqref{diop1} and \eqref{diop2} of \\  
  the monomials \\
  of degree 5 for \\
  $\langle \cdots \rangle_1$ and $\langle \cdots \rangle_2$ \\
  \end{tabular}
  &
  \begin{tabular}{l}
  identity matrix
  \end{tabular}
  \end{array}
  \right]
  \]
  \caption{Matrix for the proof of Theorem \ref{degree5-Jordan dialgebra}}
  \label{figureJDmatrix}
  \end{figure}
  
We compute the row canonical form of this matrix and find that its rank is 2215.
We ignore the first 1655 rows since their leading ones are in the left part, and 
retain only the next 560 rows which have have their leading ones in the right part.
We sort these rows by increasing number of nonzero components. 

We construct another matrix with an upper block of size $810 \times 810$ and 
a lower block of size $120 \times 810$.
For each of the 560 identities satisfied by the operations 
$\langle \cdots \rangle_1$ and $\langle \cdots \rangle_2$,
we apply all $5!$ permutations of the variables, store the permuted identities in the lower block,
and compute the row canonical form of the matrix; 
after each iteration, the lower block is the zero matrix.
We record the index numbers of the identities which increase the rank, and obtain these results:
  \begin{center}
  \begin{tabular}{lrrrrrrrr}
  identity & 1 & 121 & 241 & 301 & 331 & 342 & 451 & 454 \\
  rank & 120 & 240 & 360 & 390 & 450 & 470 & 530 & 560 
  \end{tabular}
  \end{center}
Further computations show that identity 301 is superfluous: 
the other seven identities generate the entire 560-dimensional space.
These seven identities are displayed in standard notation in the statement of this Theorem.
\end{proof}

\begin{theorem} \label{Jordandialgebratheorem} 
If $D$ is a subspace of a Jordan dialgebra over a field of characteristic not $2, 3, 5$
which is closed under the trilinear operations $\langle\cdots\rangle_1$ and $\langle\cdots\rangle_2$
of equations \eqref{diop1} and \eqref{diop2},
then $D$ is a Jordan triple disystem with respect to these operations.
\end{theorem}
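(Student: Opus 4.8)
The plan is to obtain this theorem as a corollary of Lemma~\ref{degree3-Jordan dialgebra} and Theorem~\ref{degree5-Jordan dialgebra}. By definition, $D$ is a Jordan triple disystem under $\langle\cdots\rangle_1$ and $\langle\cdots\rangle_2$ exactly when these two operations satisfy the defining identities (J1)--(J8). Lemma~\ref{degree3-Jordan dialgebra} already establishes that they satisfy (J1), and Theorem~\ref{degree5-Jordan dialgebra} establishes that they satisfy the seven displayed identities of degree~$5$. Since any consequence of identities that hold in $D$ again holds in $D$, it is enough to prove that each of (J2)--(J8) is a consequence of (J1) together with those seven identities; (J1) is then immediate. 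The hypothesis $\mathrm{char}\,\mathbb{F}\ne 2,3,5$ is exactly the one carried over from Theorem~\ref{degree5-Jordan dialgebra}, where the modulus must exceed the degree $5$ for the computation over $\mathbb{F}S_5$ to reproduce the characteristic-zero answer.

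The first four identities cost nothing. Identity (J1) is precisely the symmetry $\langle a,b,c\rangle_2\equiv\langle c,b,a\rangle_2$ proved in Lemma~\ref{degree3-Jordan dialgebra}. For (J2)--(J4) I would rewrite the first three identities of Theorem~\ref{degree5-Jordan dialgebra} using (J1), that is, the symmetry of the second operation in its outer arguments. Applying this symmetry inside the second slot of the first identity replaces $\langle a,\langle d,c,b\rangle_2,e\rangle_1$ by $\langle a,\langle b,c,d\rangle_2,e\rangle_1$, turning that identity into (J2); the identical substitution converts the second and third identities of Theorem~\ref{degree5-Jordan dialgebra} into (J3) and (J4).

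The substantive step, and the one I expect to be the real obstacle, is to derive the four derivation-type identities (J5)--(J8) from the remaining four identities of Theorem~\ref{degree5-Jordan dialgebra}. These involve both operations simultaneously and all five arguments, so the reduction is delicate: one must use (J1)--(J4) and their substitution consequences to eliminate or relocate the second-operation subterms, apply the appropriate transpositions and cyclic permutations of $a,b,c,d,e$, and then form the correct linear combinations. This is precisely the bookkeeping carried out in the Remark following Theorem~\ref{identities Maple}, which reconciles the eight identities of Theorem~\ref{identities Maple} with those of Theorem~\ref{KPidentities}; here the same procedure should transform the last four identities of Theorem~\ref{degree5-Jordan dialgebra}, modulo (J1)--(J4), into (J5)--(J8). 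Because both generating sets are finite and explicit, the whole reduction amounts to a linear-algebra verification inside the degree-$5$ multilinear component, which can be confirmed directly or with the same computer algebra used for Theorem~\ref{degree5-Jordan dialgebra}.

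Finally, I would supply the independent argument promised in the abstract. Instead of enumerating degree-$5$ identities, one uses Jordan structure theory to interpret $\langle\cdots\rangle_1$ and $\langle\cdots\rangle_2$ through a genuine Jordan algebra attached to the Jordan dialgebra $D$, and then reads off (J1)--(J8) from the classical fact that the triple product \eqref{triple Jordan product} makes any Jordan algebra into a Jordan triple system. This conceptual route does not rely on modular arithmetic and should therefore establish the theorem without the extra exclusion of characteristic $5$, serving as a check on the computational proof.
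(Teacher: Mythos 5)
Your overall strategy is the same as the paper's first proof: take the generating identities from Lemma \ref{degree3-Jordan dialgebra} and Theorem \ref{degree5-Jordan dialgebra} and show that \eqref{JTD1}--\eqref{JTD8} are consequences of them. Your treatment of \eqref{JTD1}--\eqref{JTD4} is correct and matches the paper exactly: the symmetry of $\langle\cdots\rangle_2$ in its outer arguments converts the first three identities of Theorem \ref{degree5-Jordan dialgebra} into \eqref{JTD2}, \eqref{JTD3}, \eqref{JTD4}.

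However, there is a genuine gap at precisely the point you flag as ``the substantive step'': you do not actually derive \eqref{JTD5}--\eqref{JTD8} from the last four identities of Theorem \ref{degree5-Jordan dialgebra}; you only assert that the reduction ``should'' work and could be checked by linear algebra or computer. This is the core of the proof, and it is not mere bookkeeping that can be waved at: the derivations require specific choices that are not visible from the statements alone. In the paper's argument, \eqref{JTD7} is obtained first (from the sixth identity of Theorem \ref{degree5-Jordan dialgebra} via \eqref{JTD1} and \eqref{JTD3}); \eqref{JTD6} requires applying the transpositions $ae$ and $bd$ to the fourth identity before using \eqref{JTD1} and \eqref{JTD4}; \eqref{JTD8} is then obtained from the fifth identity \emph{using the already-derived} \eqref{JTD6}; and \eqref{JTD5} is obtained from the seventh identity \emph{using the already-derived} \eqref{JTD7} together with a further symmetry manipulation in the third slot. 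Without exhibiting these chains (or an equivalent computation), the claim that \eqref{JTD5}--\eqref{JTD8} lie in the $S_5$-module generated by the seven identities is unverified. Your concluding sketch of a structure-theoretic second proof is also too optimistic as stated: one cannot simply ``attach a genuine Jordan algebra'' to $D$ and read off \eqref{JTD5}--\eqref{JTD8} from the classical triple-product construction, because the two operations differ by terms that do not all vanish modulo the annihilator; the paper's operator-theoretic proof instead expresses $\langle\cdots\rangle_1$ and $\langle\cdots\rangle_2$ as $R_{cb}+[R_c,R_b]$ and $L_{ba}-[L_b,R_a]$ and relies on the facts that $[R_a,R_b]$ is a derivation, $[L_a,R_b]$ is a left derivation, and the operator identities $R_{[R_a,R_b]c}=[[R_a,R_b],R_c]$ and $L_{[L_a,R_b]c}=[[L_a,R_b],R_c]$, none of which appear in your sketch.
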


\begin{proof}
Theorem \ref{degree3-Jordan dialgebra} tells us that identity \eqref{JTD1} is satisfied:
  \[
  \langle a, b, c\rangle_2 \equiv  \langle c, b, a\rangle_2.
  \] 
Identities \eqref{JTD2}, \eqref{JTD3} and \eqref{JTD4} can be easily obtained from \eqref{JTD1}
and the first three identities of Theorem \ref{degree5-Jordan dialgebra}:
  \begin{align*}
  \langle a, \langle b, c, d\rangle_1, e\rangle_1 
  &\equiv 
  \langle a, \langle d, c, b\rangle_2, e\rangle_1 
  \equiv 
  \langle a, \langle b, c, d\rangle_2, e\rangle_1, 
  \\
  \langle a, b, \langle c, d, e\rangle_1\rangle_1 
  &\equiv 
  \langle a, b, \langle e, d, c\rangle_2\rangle_1 
  \equiv 
  \langle a, b, \langle c, d, e\rangle_2\rangle_1,
  \\
  \langle \langle a, b, c\rangle_1, d, e\rangle_2 
  &\equiv 
  \langle \langle c, b, a\rangle_2, d, e\rangle_2 
  \equiv 
  \langle \langle c, b, a\rangle_2, d, e\rangle_2.
  \end{align*}
Identity \eqref{JTD7} is a consequence of the sixth identity of Theorem \ref{degree5-Jordan dialgebra} 
by applying \eqref{JTD1} and \eqref{JTD3}. 
To obtain \eqref{JTD6}, we apply the transpositions $ae$ and $bd$ to the fourth identity of 
Theorem \ref{degree5-Jordan dialgebra}: 
  \[
  \langle \langle e,d,c\rangle_2, b, a \rangle_1 
  - 
  \langle \langle a,b,e\rangle_2, d, c \rangle_2 
  + 
  \langle e, \langle d,b,a\rangle_1, c \rangle_2 
  - 
  \langle \langle a,b,c\rangle_2, d, e \rangle_2 
  \equiv 
  0.
  \] 
Using identities \eqref{JTD1} and \eqref{JTD4} we obtain these two identities, 
  \begin{align*}
  \langle \langle a, b, e\rangle_2, d, c\rangle_2 
  &\equiv 
  \langle \langle e, b, a\rangle_2, d, c\rangle_2 
  \equiv 
  \langle \langle e, b, a\rangle_1, d, c\rangle_2, 
  \\
  \langle \langle a, b, c\rangle_2, d, e\rangle_2 
  &\equiv 
  \langle \langle c, b, a\rangle_2, d, e\rangle_2 
  \equiv 
  \langle \langle c, b, a\rangle_1, d, e\rangle_2,
  \end{align*} 
and substituting these in the previous identity gives \eqref{JTD6}. 
To obtain identity \eqref{JTD8}, we apply \eqref{JTD1} and \eqref{JTD4} to the fifth identity of 
Theorem \ref{degree5-Jordan dialgebra}: 
  \begin{equation} \label{before JTD6}
  \langle \langle a, b, c \rangle_2, d, e \rangle_1 
  - 
  \langle a, \langle b, c, d \rangle_1, e \rangle_2 
  - 
  \langle c, \langle b, a, d \rangle_1, e \rangle_2 
  + 
  \langle \langle a, d, c \rangle_1, b, e \rangle_2 
  \equiv 
  0.
  \end{equation} 
Identity \eqref{JTD6} gives  
  \[
  - 
  \langle c, \langle b, a, d \rangle_1, e \rangle_2 
  + 
  \langle \langle a, d, c \rangle_1, b, e \rangle_2 
  \equiv 
  - 
  \langle a, b, \langle e, d, c \rangle_1 \rangle_2 
  + 
  \langle \langle a, b, e \rangle_2, d, c \rangle_1.
  \] 
On the other hand, using identities \eqref{JTD1} and \eqref{JTD4} we obtain
  \begin{align*}
  &
  \langle a, b, \langle e, d, c\rangle_1\rangle_2 
  \equiv 
  \langle \langle e, d, c\rangle_1, b, a\rangle_2 
  \equiv 
  \langle \langle e, d, c\rangle_2, b, a\rangle_2 
  \equiv {}
  \\
  &
  \langle \langle c, d, e\rangle_2, b, a\rangle_2 
  \equiv 
  \langle \langle c, d, e\rangle_1, b, a\rangle_2 
  \equiv 
  \langle a, b, \langle c, d, e\rangle_1\rangle_2.
  \end{align*} 
Applying these considerations to identity \eqref{before JTD6} we obtain
  \[
  \langle \langle a, b, c \rangle_2, d, e \rangle_1 
  - 
  \langle c, \langle b, a, d \rangle_1, e \rangle_2 
  + 
  \langle \langle a, b, e \rangle_2, d, c \rangle_1 
  - 
  \langle a, b, \langle c, d, e \rangle_1 \rangle_2 
  \equiv 
  0,
  \]
which is equivalent to \eqref{JTD8}. 
It remains to show \eqref{JTD5}.
We apply \eqref{JTD1} and \eqref{JTD2} to the last identity of Theorem \ref{degree5-Jordan dialgebra} and obtain
  \begin{equation} \label{before JTD5}
  \langle \langle a, b, c \rangle_1, d, e \rangle_1 
  + 
  \langle \langle a, d, c \rangle_1, b, e \rangle_1 
  - 
  \langle a, \langle b, c, d \rangle_1, e \rangle_1 
  - 
  \langle c, \langle b, a, d \rangle_2, e \rangle_2 
  \equiv 
  0.
  \end{equation} 
From \eqref{JTD7} we have: 
  \[
  \langle c, \langle b, a, d \rangle_2, e \rangle_2 
  \equiv 
  - 
  \langle a, b, \langle c, d, e \rangle_1 \rangle_1 
  + 
  \langle \langle a, b, c \rangle_1, d, e \rangle_1 
  + 
  \langle \langle a, b, e \rangle_1, d, c \rangle_1, 
  \]
and substituting this in \eqref{before JTD5} gives
  \begin{equation} \label{almost JTD5}
  \langle \langle a, d, c \rangle_1, b, e \rangle_1 
  - 
  \langle a, \langle b, c, d \rangle_1, e \rangle_1 
  + 
  \langle a, b, \langle c, d, e \rangle_1 \rangle_1 
  - 
  \langle \langle a, b, e \rangle_1, d, c \rangle_1 
  \equiv 
  0.
  \end{equation} 
To finish, we observe that 
  \[ 
  \langle a, b, \langle c, d, e \rangle_1 \rangle_1  
  \equiv 
  \langle a, b, \langle c, d, e \rangle_2 \rangle_1  
  \equiv 
  \langle a, b, \langle e, d, c \rangle_2 \rangle_1 
  \equiv 
  \langle a, b, \langle e, d, c \rangle_1 \rangle_1,
  \]
and applying these to \eqref{almost JTD5} yields
  \[
  \langle \langle a, d, c \rangle_1, b, e \rangle_1 
  - 
  \langle a, \langle b, c, d \rangle_1, e \rangle_1 
  + 
  \langle a, b, \langle e, d, c \rangle_1 \rangle_1 
  - 
  \langle \langle a, b, e \rangle_1, d, c \rangle_1 
  \equiv 
  0,
  \]
which is equivalent to \eqref{JTD5}. 
\end{proof}

\subsection*{Second proof of Theorem \ref{Jordandialgebratheorem}}

We conclude with an alternative proof, without using computer algebra,  
of the defining identities \eqref{JTD1}--\eqref{JTD8} for Jordan triple disystems 
with respect to the operations $\langle \cdots \rangle_1$ and $\langle \cdots \rangle_2$
in a Jordan dialgebra:
  \[
  \langle a,b,c \rangle_1 = (ab)c - (ac)b + a(bc),
  \qquad
  \langle a,b,c \rangle_2 = (ba)c + (bc)a - b(ac).
  \]

\begin{definition}
Let $D$ be a Jordan dialgebra with operation $ab$.
The \textbf{annihilator} $D^\mathrm{ann}$ is the ideal spanned by 
$\{ \, [a,b] = ab-ba \mid a, b \in D \, \}$.
The \textbf{right center} $Z^{r}(D)$ is the ideal consisting of the elements $b \in D$ 
such that $ab = 0$ for all $a \in D$. 
A \textbf{derivation} is a linear map $\delta\colon D \rightarrow D$ satisfying 
$\delta(ab) = \delta(a)b + a\delta(b)$ for all $a, b \in D$.
A \textbf{left derivation} is a linear map $\mu\colon D \rightarrow D$ satisfying 
$\mu(ab) = \mu(a)b + \mu(b)a$ for all $a, b \in D$. 
For $a \in D$ the \textbf{right} and \textbf{left multiplication operators} 
$R_a\colon D \rightarrow D$, $L_a\colon D \rightarrow D$ are defined by 
$R_a b = ba$, $L_a b = ab$ for all $b \in D$. 
\end{definition}

\begin{lemma}
We have $D^{\rm ann} \subseteq Z^{r}(D)$ for any Jordan dialgebra $D$.
\end{lemma}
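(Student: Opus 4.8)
The plan is to exploit right commutativity $a(bc) \equiv a(cb)$, the first defining identity of a Jordan dialgebra, which controls products whose rightmost factor is itself a product. The strategy splits into two parts: first show that every generating commutator $[a,b] = ab - ba$ already lies in the right center $Z^{r}(D)$, and then observe that $Z^{r}(D)$ is an ideal, so the ideal $D^{\mathrm{ann}}$ that the commutators generate is forced inside it.

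First I would verify the generators. For an arbitrary $c \in D$, right commutativity applied with rightmost factor $ab$ versus $ba$ gives $c(ab) = c(ba)$, whence $c[a,b] = c(ab) - c(ba) = 0$. Since $c$ was arbitrary, $[a,b]$ satisfies $c[a,b] = 0$ for all $c$, which is exactly the condition defining membership in $Z^{r}(D)$.

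Next I would confirm that $Z^{r}(D)$ is a two-sided ideal, again using only right commutativity. If $b \in Z^{r}(D)$, meaning $xb = 0$ for all $x \in D$, then for any $d \in D$ the product $db$ vanishes and hence lies trivially in $Z^{r}(D)$; for the product $bd$ I would compute, for arbitrary $a$, that $a(bd) = a(db)$ by right commutativity, and since $db = 0$ this equals $0$, so $bd \in Z^{r}(D)$ as well. Thus $Z^{r}(D)$ absorbs multiplication on both sides.

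Finally, because $D^{\mathrm{ann}}$ is by definition the smallest ideal containing all commutators, and $Z^{r}(D)$ is an ideal containing every commutator, the inclusion $D^{\mathrm{ann}} \subseteq Z^{r}(D)$ follows at once. I do not anticipate a genuine obstacle: the argument is essentially a one-line consequence of right commutativity, and the only points requiring care are keeping track of which slot the identity acts on and invoking the ideal-generation property rather than merely the linear span of the commutators.
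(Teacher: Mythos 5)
Your proposal is correct and rests on exactly the same core computation as the paper's one-line proof, namely that right commutativity gives $c[a,b] = c(ab) - c(ba) = 0$. You additionally verify that $Z^{r}(D)$ is an ideal so that the full ideal generated by the commutators (not merely their span) lands inside it, a routine point the paper leaves implicit.
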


\begin{proof}
Right commutativity gives $c[a,b] = c(ab) - c(ba) = 0$.
\end{proof}

\begin{lemma} \label{difference}
If $D$ is a Jordan dialgebra then for all $a, b, c \in D$ we have
  \[
  \langle a,b,c \rangle_1 - \langle c,b,a \rangle_1 \in D^{\rm ann},
  \qquad
  \langle a,b,c \rangle_1 - \langle a,b,c \rangle_2 \in D^{\rm ann}.
  \]
\end{lemma}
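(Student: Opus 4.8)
The plan is to show that each of the two differences is literally a sum of elements of the form $[x,y]=xy-yx$ together with products of such commutators by other elements; since $D^{\mathrm{ann}}$ is by definition the ideal spanned by all commutators, any such sum lies in $D^{\mathrm{ann}}$. Equivalently, one may pass to the quotient $\bar D=D/D^{\mathrm{ann}}$, whose induced product is commutative because every commutator dies there, and check that both differences map to $0$ in $\bar D$; this is a purely formal consequence of commutativity of the product (no associativity is needed), and it is the viewpoint I would keep in mind as a sanity check.

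For the second difference I would expand the definitions $\langle a,b,c \rangle_1=(ab)c-(ac)b+a(bc)$ and $\langle a,b,c \rangle_2=(ba)c+(bc)a-b(ac)$ and regroup the six resulting terms as $\langle a,b,c \rangle_1-\langle a,b,c \rangle_2=[a,b]\,c+[a,bc]+[b,ac]$. Each summand is visibly in $D^{\mathrm{ann}}$: the first is a commutator multiplied by $c$ (and $D^{\mathrm{ann}}$ is an ideal), while the other two are themselves commutators. This case uses none of the Jordan identities, only the ideal property of $D^{\mathrm{ann}}$.

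For the first difference I would again expand, obtaining $\langle a,b,c \rangle_1-\langle c,b,a \rangle_1=(ab)c-(ac)b+a(bc)-(cb)a+(ca)b-c(ba)$. The two ``$\cdot\,b$'' terms combine immediately as $-(ac)b+(ca)b=[c,a]\,b\in D^{\mathrm{ann}}$. The remaining four terms $(ab)c+a(bc)-(cb)a-c(ba)$ do not group into commutators as they stand; here I would invoke right commutativity from Definition \ref{JDdefinition}, rewriting $a(bc)=a(cb)$ and $c(ba)=c(ab)$, after which the four terms become $[ab,c]+[a,cb]$, again in $D^{\mathrm{ann}}$. Thus $\langle a,b,c \rangle_1-\langle c,b,a \rangle_1=[c,a]\,b+[ab,c]+[a,cb]$.

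The one nonroutine point is this last step: recognizing that right commutativity is exactly the identity needed to turn the stubborn combinations $a(bc)-(cb)a$ and $(ab)c-c(ba)$ into genuine commutators $[a,cb]$ and $[ab,c]$. It is worth recording that right commutativity is available both as a defining axiom and because $[b,c]\in D^{\mathrm{ann}}\subseteq Z^r(D)$ (by the preceding lemma) forces $a\,[b,c]=0$, that is, $a(bc)=a(cb)$; this is the same mechanism that makes $\bar D$ commutative, so the explicit and the conceptual arguments agree. Once these regroupings are in hand, no genuine obstacle remains.
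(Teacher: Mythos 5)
Your proposal is correct and matches the paper's own proof essentially line for line: both differences are regrouped into the same commutator decompositions $[ab,c]+[c,a]b+[a,cb]$ and $[a,b]c+[a,bc]+[b,ac]$, using right commutativity for the first and only the ideal property of $D^{\mathrm{ann}}$ for the second. No further comment is needed.
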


\begin{proof} 
Right commutativity gives
  \begin{align*}
  &
  \langle a,b,c \rangle_1 - \langle c,b,a \rangle_1 
  = 
  (ab)c - (ac)b + a(bc) - (cb)a + (ca)b - c(ba) 
  \\ 
  &= 
  (ab)c - c(ab) + (ca-ac)b + a(cb) - (cb)a
  =
  [ab,c] + [c,a]b + [a,cb],
  \\
  &
  \langle a,b,c \rangle_1 - \langle a,b,c \rangle_2 
  = 
  (ab)c-(ac)b+ a(bc)-(ba)c -(bc)a+b(ac) 
  \\
  &= 
  (ab-ba)c + a(bc) - (bc)a + b(ac) - (ac)b
  =
  [a,b]c + [a,bc] + [b,ac].
  \end{align*} 
Both expressions belong to $D^\mathrm{ann}$ since the annihilator is an ideal.
\end{proof}

\begin{proposition} \label{Identities from 1 to 4}
If $D$ is a Jordan dialgebra then identities \eqref{JTD1}--\eqref{JTD4} are satisfied 
by the operations $\langle \cdots \rangle_1$ and $\langle \cdots \rangle_2$:
  \begin{align*}
  &
  \langle a,b,c \rangle_2 \equiv \langle c,b,a \rangle_2,
  \\ 
  &
  \langle a, \langle b, c, d \rangle_1, e \rangle_1 \equiv
  \langle a, \langle b, c, d \rangle_2, e \rangle_1 \equiv
  \langle a, \langle d, c, b \rangle_1, e \rangle_1,
  \\ 
  &
  \langle a, b, \langle c, d, e \rangle_1 \rangle_1 \equiv
  \langle a, b, \langle c, d, e \rangle_2 \rangle_1 \equiv
  \langle a, b, \langle e, d, c \rangle_1 \rangle_1,
  \\ 
  &
  \langle \langle a, b, c \rangle_1, d, e \rangle_2 \equiv
  \langle \langle a, b, c \rangle_2, d, e \rangle_2 \equiv
  \langle \langle c, b, a \rangle_1, d, e \rangle_2.
  \end{align*}
\end{proposition}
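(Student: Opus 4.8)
The plan is to reduce all four identities to a single vanishing principle for annihilator elements, so that beyond Lemma \ref{difference} no further expansion of the triple products is needed. First I would dispose of \eqref{JTD1} by direct computation: writing out
\[
\langle a,b,c\rangle_2 - \langle c,b,a\rangle_2
= (ba)c + (bc)a - b(ac) - (bc)a - (ba)c + b(ca),
\]
everything cancels except $b(ca) - b(ac)$, which vanishes by right commutativity. Thus \eqref{JTD1} is immediate.

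For the remaining three identities the key observation is that each asserts the equality of two triple products differing in a single argument, and by Lemma \ref{difference} the two competing arguments differ by an element of $D^{\mathrm{ann}}$. It therefore suffices to establish the vanishing principle that, for $x \in D^{\mathrm{ann}}$,
\[
\langle a,x,e\rangle_1 = 0,
\qquad
\langle a,b,x\rangle_1 = 0,
\qquad
\langle x,d,e\rangle_2 = 0,
\]
so that an annihilator element kills operation $1$ when placed in its second or third slot, and operation $2$ when placed in its first slot. I would prove these by expanding each product and using two properties of $x$: since $D^{\mathrm{ann}} \subseteq Z^r(D)$, any product having $x$ on the right is zero; and since $D^{\mathrm{ann}}$ is an ideal, any product $xy$ again lies in $Z^r(D)$ and so is itself annihilated on the right. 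For example $\langle a,x,e\rangle_1 = (ax)e - (ae)x + a(xe)$ vanishes term by term: $ax = 0$ kills the first term, $(ae)x = 0$ kills the second, and $a(xe) = 0$ because $xe \in D^{\mathrm{ann}} \subseteq Z^r(D)$ sits on the right of $a$. The other two cases are entirely analogous.

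With the vanishing principle in hand, \eqref{JTD2}--\eqref{JTD4} follow by linearity in the varied slot. For \eqref{JTD2} the differences $\langle b,c,d\rangle_1 - \langle b,c,d\rangle_2$ and $\langle b,c,d\rangle_1 - \langle d,c,b\rangle_1$ both lie in $D^{\mathrm{ann}}$ by Lemma \ref{difference}, so inserting either one into the middle slot of $\langle a,-,e\rangle_1$ gives zero; identity \eqref{JTD3} is the same argument with the annihilator element in the third slot of operation $1$, and \eqref{JTD4} with it in the first slot of operation $2$. In each case the two differences required are exactly the two supplied by Lemma \ref{difference}. I do not expect a genuine obstacle here; the only point demanding care is to match the position of the varied argument in each identity to the correct one of the three vanishing slots, and it is precisely this matching that makes the two reversals furnished by Lemma \ref{difference}---reversing the outer arguments of operation $1$, and trading operation $1$ for operation $2$---line up exactly with the pairs of equalities in \eqref{JTD2}, \eqref{JTD3} and \eqref{JTD4}.
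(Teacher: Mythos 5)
Your proposal is correct and follows essentially the same route as the paper: identity \eqref{JTD1} by direct expansion and right commutativity, and \eqref{JTD2}--\eqref{JTD4} by combining Lemma \ref{difference} with the observation that an element of $D^{\mathrm{ann}} \subseteq Z^r(D)$ annihilates the relevant slot of $\langle\cdots\rangle_1$ or $\langle\cdots\rangle_2$. The only difference is that you spell out the three vanishing statements explicitly where the paper treats one case and declares the others similar.
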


\begin{proof} 
The first line follows directly from right commutativity. 
For the second line, using Lemma \ref{difference} it suffices to show
$\langle a, x, e \rangle_1 \equiv 0$ for $x \in D^\mathrm{ann}$,
and again this follows from right commutativity.
The third and fourth lines are similar.
\end{proof}

Right commutativity is equivalent to both $R_{ab} = R_{ba}$ and $L_a L_b = L_a R_b$. 
The operations $\langle \cdots \rangle_1$ and $\langle \cdots \rangle_2$ 
can be expressed using multiplication operators: 
  \begin{align*}
  \langle a,b,c \rangle_1 
  &=
  R_c R_b \, a - R_b R_c \, a + R_{cb} \, a  
  = 
  \big( \, R_{cb} + [ R_c, R_b ] \, \big) a
  \\
  &=
  L_{ab} \, c - R_b L_a \, c + L_a L_b \, c 
  = 
  \big( \, L_{ab} + [ L_a, R_b ] \, \big) c,
  \\
  \langle a,b,c \rangle_2 
  &=
  L_{ba} \, c + R_a L_b \, c - L_b L_a \, c 
  = 
  \big( L_{ba} - [ L_b, R_a ] \, \big) c.
  \end{align*}
It has been shown by Felipe and Vel\'asquez \cite{Felipe, VelasquezFelipe3} that 
$[R_a, R_b]$ is a derivation, that $[L_a, R_b]$ is a left derivation, and 
that for all $a, b, c \in D$ we have
  \begin{align}
  R_{[R_a,R_b]c} &= [[R_a,R_b],R_c], 
  \label{R and bracket} 
  \\
  L_{[L_a,R_b]c} &= [[L_a,R_b],R_c].
  \label{L and bracket}  
  \end{align}

\begin{lemma} \label{R_t, L_t and products 1, 2} 
If $D$ is a Jordan dialgebra then for $i = 1, 2$ and $a, b, c, d \in D$ we have
  \begin{align*}
  (1) \quad 
  R_d \, \langle a, b, c \rangle_i  
  &= 
  \langle \, R_d \, a, b, c \, \rangle_i 
  - 
  \langle \, a, R_d \, b, c \, \rangle_i 
  + 
  \langle \, a, b, R_d \, c \, \rangle_i,
  \\
  (2) \quad
  L_d \, \langle a, b, c \rangle_i
  &= 
  \langle \, L_d \, a, b, c \, \rangle_1 
  - 
  \langle \, a, R_d \, b, c \, \rangle_2 
  + 
  \langle \, L_d \, c, b, a \, \rangle_1.
  \end{align*}
\end{lemma}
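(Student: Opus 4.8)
The plan is to prove both identities by passing to the multiplication operators, using the operator expressions for $\langle\cdots\rangle_1$ and $\langle\cdots\rangle_2$ recorded just above the statement, and then commuting $R_d$ (for Part~(1)) or $L_d$ (for Part~(2)) through the inner commutator by means of the (left-)derivation properties of $[R_a,R_b]$ and $[L_a,R_b]$ together with the Felipe--Vel\'asquez relations \eqref{R and bracket} and \eqref{L and bracket}. In each case the terms involving the outer multiplication operator and the inner commutator cancel against the distributed right-hand side, leaving a single \emph{pure associator identity}, which I expect to be exactly the Osborn identity in the form \eqref{qj1'''}.

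For Part~(1) with $i=1$ I would start from $\langle a,b,c\rangle_1=(L_{ab}+[L_a,R_b])c$ and apply $R_d$. Writing $\mu=[L_a,R_b]$, identity \eqref{L and bracket} gives $L_{\mu d}=[\mu,R_d]$, hence $R_d\mu=\mu R_d-L_{[L_a,R_b]d}$; since $[L_a,R_b]d=a(bd)-(ad)b$ after right commutativity, and $R_d$ distributes over the outer $L_{ab}$, everything cancels against $\langle R_da,b,c\rangle_1-\langle a,R_db,c\rangle_1+\langle a,b,R_dc\rangle_1$ except the operator identity $[R_d,L_{ab}]=[L_{ad},R_b]-[L_a,R_{bd}]$. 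Evaluating both sides on an arbitrary $x$ converts each commutator into an associator, turning this into $(ab,x,d)+(ad,x,b)-(a,x,bd)\equiv 0$, which is \eqref{qj1'''}. For $i=2$ I would run the identical computation from $\langle a,b,c\rangle_2=(L_{ba}-[L_b,R_a])c$; the residue is $[R_d,L_{ba}]=[L_{bd},R_a]-[L_b,R_{ad}]$, again \eqref{qj1'''} after relabelling. Note that $R_d$ does \emph{not} annihilate $D^{\mathrm{ann}}$, so the two cases genuinely require separate (though parallel) computations rather than a reduction of one to the other.

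For Part~(2) I would first observe that the right-hand side is independent of $i$, and that by Lemma~\ref{difference} the difference $\langle a,b,c\rangle_1-\langle a,b,c\rangle_2$ lies in $D^{\mathrm{ann}}\subseteq Z^r(D)$. Since $dx=R_x d=0$ whenever $R_x=0$, the operator $L_d$ annihilates $Z^r(D)$, so $L_d\langle a,b,c\rangle_1=L_d\langle a,b,c\rangle_2$ and it suffices to treat $i=1$. I would then expand $L_d\langle a,b,c\rangle_1$ in the same operator form and commute $L_d$ inward using the derivation/left-derivation relations. The delicate point is the third term $\langle L_dc,b,a\rangle_1$, whose arguments are reversed: matching it requires the $L_d$-symmetry $L_d\langle a,b,c\rangle_1=L_d\langle c,b,a\rangle_1$ (again from Lemma~\ref{difference}) together with the symmetry $\langle a,b,c\rangle_2\equiv\langle c,b,a\rangle_2$ of Proposition~\ref{Identities from 1 to 4} to align the middle term $\langle a,R_db,c\rangle_2$.

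I expect the main obstacle to be Part~(2): unlike Part~(1), its three right-hand terms are most naturally carried on different ``free'' slots, so the bookkeeping of which operation ($\langle\cdots\rangle_1$ versus $\langle\cdots\rangle_2$) appears, combined with the argument reversals, is fragile, and reducing the surviving operator identity to the defining identities will likely need not only \eqref{qj1'''} but also the linearized right Jordan identity. The safe fallback, should the operator manipulation become unwieldy, is to expand both sides of each identity via \eqref{diop1} and \eqref{diop2} into binary monomials and reduce directly using right commutativity, the Osborn identity, and the right Jordan identity---essentially the hand computation underlying the computer-algebra verification of Theorem~\ref{degree5-Jordan dialgebra}.
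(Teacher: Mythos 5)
Your Part (1) is correct and is essentially the paper's argument: an operator computation in which the inner commutator $[L_a,R_b]$ (resp.\ $[L_b,R_a]$) is pushed past $R_d$ via \eqref{L and bracket}, leaving exactly the operator identity $[L_a,R_{bd}]+[R_b,L_{ad}]+[R_d,L_{ab}]=0$, which is \eqref{L, R, Lie bracket}, i.e.\ the linearized Osborn identity \eqref{qj1'''}. (The paper happens to treat $i=1$ in the $R$-operator form $\langle a,b,c\rangle_1=(R_{cb}+[R_c,R_b])a$, closing with \eqref{R and bracket} and the linearized right Jordan identity, and treats $i=2$ exactly as you do; the two routes are interchangeable.) Your remark that $R_d$ does not annihilate $D^{\mathrm{ann}}$, so the two values of $i$ need separate computations, is also correct.

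Part (2) is where the gap is. The reduction of $i=2$ to $i=1$ via $L_d(D^{\mathrm{ann}})=0$ is right and matches the paper, but you never carry out the $i=1$ computation, and the plan you sketch for it would not close. The reversed third term needs no symmetry argument: the paper simply writes every term as an operator acting on $c$, using $\langle x,b,a\rangle_1=(R_aR_b-R_bR_a+R_{ba})\,x$ for $\langle L_d c,b,a\rangle_1$ and $\langle x,b,c\rangle_1=(L_{xb}+[L_x,R_b])\,c$ for $\langle L_d a,b,c\rangle_1$; after right commutativity the whole expression collapses to $([L_d,R_{ab}]+[R_b,L_{da}]+[R_a,L_{db}])\,c$ together with a term $[L_d,[R_a,R_b]]+L_{[R_a,R_b]d}$ that vanishes because $[R_a,R_b]$ is a derivation, and \eqref{L, R, Lie bracket} finishes it. More seriously, your proposal to ``align the middle term $\langle a,R_d b,c\rangle_2$'' using the symmetry $\langle a,b,c\rangle_2\equiv\langle c,b,a\rangle_2$ and the $L_d$-symmetry of $\langle\cdots\rangle_1$ cannot succeed: the middle slot of $\langle\cdots\rangle_2$ sees its argument only through left multiplications, $\langle a,y,c\rangle_2=(ya)c+(yc)a-y(ac)$, and this does not kill $D^{\mathrm{ann}}$, so $\langle a,R_d b,c\rangle_2\neq\langle a,L_d b,c\rangle_2$ in general (this is already visible in the free special Jordan dialgebra). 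The identity that is actually true---and the one the paper proves and later uses in deriving \eqref{JTD7} and \eqref{JTD8}---has $L_d\,b=db$ in the middle slot; the $R_d\,b$ in the printed statement is a misprint. Your brute-force fallback of expanding via \eqref{diop1} and \eqref{diop2} would of course succeed, but only for the corrected statement.
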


\begin{proof}
(1) For $i = 1$ we use the linearization of the right Jordan identity,
  \begin{equation} \label{double linearization}
  [R_{a},R_{bc}] + [R_{b},R_{ca}] + [R_{c},R_{ab}] = 0.
  \end{equation} 
By definition of $\langle \cdots \rangle_1$ we have
  \begin{align*} 
  R_d \langle a, b, c \rangle_1 &= ((ab)c)d - ((ac)b)d + (a(bc))d, 
  \\
  \langle R_d a, b, c \rangle_1 &= ((ad)b)c - ((ad)c)b + (ad)(bc),
  \\
  \langle a, R_d b, c \rangle_1 &= (a(bd))c - (ac)(bd) + a((bd)c),
  \\
  \langle a, b, R_d c \rangle_1 &= (ab)(cd) - (a(cd))b + a(b(cd)).
  \end{align*}
Using right commutativity and equations \eqref{R and bracket} and \eqref{double linearization}
we obtain
  \begin{align*} 
  &
  R_d \langle a,b,c \rangle_1 
  - 
  \langle R_d a,b,c \rangle_1 
  + 
  \langle a,R_d b,c \rangle_1 
  - 
  \langle a,b,R_d c \rangle_1 
  \\
  &= 
  (a(bc))d - (ad)(bc) + (a(cd))b - (ab)(cd) + (a(bd))c - (ac)(bd)
  \\
  &\quad
  +
  ((ab)c)d - ((ac)b)d - ((ad)b)c + ((ad)c)b + a ((bd)c) - a(b(cd))
  \\
  &=
  \big(
  [R_d, R_{cb}]+ [R_b, R_{cd}] + [R_c, R_{bd}] + [R_d,[R_c, R_b]] + R_{[R_c, R_b]d}
  \big) 
  a 
  = 
  0.
  \end{align*} 
For $i = 2$ the proof is similar, using \eqref{L and bracket} and this equation proved in \cite{VelasquezFelipe3}:
  \begin{equation} \label{L, R, Lie bracket}
  [L_{a},R_{bc}] + [R_{b},L_{ac}] + [R_{c},L_{ab}] = 0, 
  \end{equation} 
 
(2) For $i = 1$, right commutativity and the definitions of 
$\langle \cdots \rangle_1$ and $\langle \cdots \rangle_2$
give
  \begin{align*} 
  &
  L_d \langle a,b,c \rangle_1 
  - 
  \langle L_d a,b,c \rangle_1 
  + 
  \langle a,L_d b,c \rangle_2 
  - 
  \langle L_d c,b,a \rangle_1 
  \\
  &=
  \big(
  L_d L_{ab} - L_d R_b L_a + L_d L_a L_b 
  - L_{R_b R_a d} + R_b L_{da} - L_{da} R_b 
  \\
  &\quad
  + L_{R_a R_b d} + R_a L_{db} - L_{db} L_a 
  - R_a R_b L_d + R_b R_a L_d - R_{ba} L_d
  \big) 
  c
  \\
  & 
  = 
  \big(
  L_d R_{ab} - L_d R_b R_a + L_d R_a R_b 
  + L_{[R_a, R_b] d} + R_b L_{da} - L_{da}R_b 
  \\
  &\quad
  + R_a L_{db} - L_{db} R_a - [R_a, R_b]ÊL_d - R_{ab} L_d 
  \big) 
  c 
  \\
  &=
  \big(
  [L_d,R_{ab}] + [R_b,L_{da}] + [R_a,L_{db}] +  L_{[R_a, R_b] d}\big) 
  c = 0.
  \end{align*}
We have used \eqref{L, R, Lie bracket} and \eqref{L and bracket} and the fact that
  \[
  L_{[R_a, R_b]d} = [[R_a, R_b], L_d],
  \]
since $[R_a, R_b]$ is a derivation of $D$.
For $i = 2$, it suffices to observe that right commutativity implies 
$L_d \langle a, b, c \rangle_1 = L_d \langle a, b, c \rangle_2$.
\end{proof}

\begin{lemma} \label{derivations}
If $D$ is a Jordan dialgebra with derivation $\delta$ and left derivation $\mu$
then for $i = 1, 2$ and $a, b, c, d \in D$ we have
  \begin{align*}
  \delta \langle a, b, c \rangle_i
  &= 
  \langle \delta a, b, c \rangle_i + \langle a, \delta b, c \rangle_i + \langle a, b, \delta c \rangle_i,
  \\
  \mu \langle a, b, c \rangle_i
  &= 
  \langle \mu a, b, c \rangle_1 + \langle a, \mu b, c \rangle_2 + \langle \mu c, b, a \rangle_1.
  \end{align*}
\end{lemma}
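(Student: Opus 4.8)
The plan is to prove the two identities separately, and for the left-derivation identity to establish the case $i=1$ by direct expansion and then deduce $i=2$ from it. The derivation identity is an instance of the general fact that a derivation of a nonassociative algebra extends to a derivation of every multilinear operation built from the product. Each of $\langle a,b,c\rangle_1$ and $\langle a,b,c\rangle_2$ is a fixed linear combination of multilinear words of multidegree $(1,1,1)$ in the product, so applying the Leibniz rule $\delta(xy)=\delta(x)y+x\delta(y)$ to each word and collecting the terms in which $\delta$ lands on $a$, on $b$, and on $c$ reproduces $\langle\delta a,b,c\rangle_i$, $\langle a,\delta b,c\rangle_i$, and $\langle a,b,\delta c\rangle_i$ respectively, with all signs positive. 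Nothing beyond linearity and the Leibniz rule is needed, and the argument is uniform in $i$, so I would dispose of both derivation cases in a single line.

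For the left-derivation identity with $i=1$, I would expand $\mu\langle a,b,c\rangle_1$ directly using $\mu(xy)=\mu(x)y+\mu(y)x$, being careful that the outer factor is always extracted on the opposite side. Collecting by the slot on which $\mu$ lands, the $\mu a$-terms assemble into $\langle\mu a,b,c\rangle_1$ and the $\mu b$-terms into $\langle a,\mu b,c\rangle_2$ verbatim. The $\mu c$-terms produce $\mu(c)(ab)-(\mu(c)a)b+(\mu(c)b)a$, whereas the target $\langle\mu c,b,a\rangle_1$ equals $(\mu(c)b)a-(\mu(c)a)b+\mu(c)(ba)$; these agree once right commutativity is used to rewrite $\mu(c)(ab)=\mu(c)(ba)$. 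Thus right commutativity enters only in this final batch, and the entire computation is a short termwise bookkeeping.

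For $i=2$ I would avoid a second expansion and instead reduce to the case just proved. The key observation is that a left derivation annihilates every commutator, since $\mu(ab)=\mu(a)b+\mu(b)a=\mu(ba)$ forces $\mu([a,b])=0$; combined with the inclusion $D^{\mathrm{ann}}\subseteq Z^r(D)$ established above, an easy induction on the construction of the ideal shows that $\mu$ vanishes on all of $D^{\mathrm{ann}}$ (indeed one may check directly that $\mu$ kills the three summands $[a,b]c$, $[a,bc]$, $[b,ac]$ of Lemma \ref{difference}, the first because $\mu([a,b])=0$ and $\mu(c)[a,b]=0$ by $[a,b]\in Z^r(D)$). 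By Lemma \ref{difference} the difference $\langle a,b,c\rangle_1-\langle a,b,c\rangle_2$ lies in $D^{\mathrm{ann}}$, so $\mu\langle a,b,c\rangle_2=\mu\langle a,b,c\rangle_1$; since the claimed right-hand side is identical for $i=1$ and $i=2$, the $i=2$ identity follows at once. The main obstacle is not conceptual but organizational: keeping the signs and the side of each extracted factor straight throughout the left-derivation expansion, and recognizing that the annihilator-killing property of $\mu$ is precisely what collapses the $i=2$ case onto $i=1$.
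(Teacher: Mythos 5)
Your proof is correct; the paper itself dismisses this lemma with the single word ``Straightforward,'' and your direct Leibniz-rule expansion (with right commutativity entering only to identify $\mu(c)(ab)$ with $\mu(c)(ba)$ in the $i=1$ case) is exactly the verification the authors intend. Your reduction of $i=2$ to $i=1$ via Lemma \ref{difference} and the fact that a left derivation kills $[a,b]c+[a,bc]+[b,ac]$ is a valid and tidy shortcut, though a second direct expansion would also close the case without appealing to the annihilator.
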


\begin{proof}
Straightforward.
\end{proof}

\begin{proposition}
If $D$ is a Jordan dialgebra then identities \eqref{JTD5}--\eqref{JTD8} are satisfied by 
the operations $\langle \cdots \rangle_1$ and $\langle \cdots \rangle_2$.
\end{proposition}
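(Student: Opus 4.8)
The plan is to obtain each of \eqref{JTD5}--\eqref{JTD8} by the same mechanism already used for \eqref{JTD1}--\eqref{JTD4} in Proposition \ref{Identities from 1 to 4}: write the \emph{outer} trilinear operation as a multiplication operator plus a (left) derivation, and then let Lemmas \ref{R_t, L_t and products 1, 2} and \ref{derivations} distribute that operator across the \emph{inner} triple product. The operator presentations displayed just before Lemma \ref{R_t, L_t and products 1, 2},
\[
\langle a,b,c\rangle_1 = \big(R_{cb}+[R_c,R_b]\big)a = \big(L_{ab}+[L_a,R_b]\big)c,
\qquad
\langle a,b,c\rangle_2 = \big(L_{ba}-[L_b,R_a]\big)c,
\]
are exactly what is needed, because $[R_a,R_b]$ is a derivation and $[L_a,R_b]$ is a left derivation, so each lemma applies to the corresponding summand.

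For \eqref{JTD5} and \eqref{JTD6} the outer operation acts on the \emph{first} slot, and $\langle x,b,a\rangle_1=(R_{ab}+[R_a,R_b])x$. First I would apply part (1) of Lemma \ref{R_t, L_t and products 1, 2} to the $R_{ab}$ summand and the derivation clause of Lemma \ref{derivations} to the $[R_a,R_b]$ summand, taking $x=\langle e,d,c\rangle_1$ for \eqref{JTD5} and $x=\langle e,d,c\rangle_2$ for \eqref{JTD6}. Collecting the three resulting slots, the first and third slots recombine into $\langle\langle e,b,a\rangle_1,d,c\rangle_i$ and $\langle e,d,\langle c,b,a\rangle_1\rangle_i$ precisely because $R_{ab}+[R_a,R_b]$ is the operator defining $\langle -,b,a\rangle_1$; the middle slot produces $\langle e,(-R_{ab}+[R_a,R_b])d,c\rangle_i$, and right commutativity ($R_{ab}=R_{ba}$, $[R_b,R_a]=-[R_a,R_b]$) rewrites $(-R_{ab}+[R_a,R_b])d=-\langle d,a,b\rangle_1$. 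This reproduces \eqref{JTD5} and \eqref{JTD6} term by term.

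For \eqref{JTD7} and \eqref{JTD8} the outer operation acts on the \emph{third} slot, so I would instead use $\langle a,b,-\rangle_1=L_{ab}+[L_a,R_b]$ and $\langle a,b,-\rangle_2=L_{ba}-[L_b,R_a]$, applying part (2) of Lemma \ref{R_t, L_t and products 1, 2} to the left-multiplication summand and the left-derivation clause of Lemma \ref{derivations} to the bracket summand, with the inner product $\langle c,d,e\rangle_1$. The first and third slots then recombine into $\langle\langle a,b,c\rangle_i,d,e\rangle_1$ and $\langle\langle a,b,e\rangle_i,d,c\rangle_1$ (the transposition of the outer first and third arguments is built into both lemmas), while the middle slot collapses to $\langle c,-(L_{ab}+[L_a,R_b])d,e\rangle_2=-\langle c,\langle b,a,d\rangle_2,e\rangle_2$ in case \eqref{JTD7}, and to $\langle c,-(L_{ba}+[L_b,R_a])d,e\rangle_2=-\langle c,\langle b,a,d\rangle_1,e\rangle_2$ in case \eqref{JTD8}.

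The routine but delicate part --- and the only place I expect a genuine risk of error --- is the index and sign bookkeeping in \eqref{JTD7} and \eqref{JTD8}. Part (2) of Lemma \ref{R_t, L_t and products 1, 2} transposes the outer first and third arguments and, crucially, sends the \emph{middle} contribution into the \emph{second} operation via the \emph{left} multiplication $L_d$ (with a minus sign), not via $R_d$; one must therefore track, slot by slot, that the two summands of the outer operator land with the correct operation subscript so that right commutativity can identify $\pm(L_{ab}+[L_a,R_b])d$ and $\pm(L_{ba}+[L_b,R_a])d$ with $\langle b,a,d\rangle_2$ and $\langle b,a,d\rangle_1$. Once this matching is checked, all three slots close up and no residual terms survive, so \eqref{JTD5}--\eqref{JTD8} follow with no recourse to computer algebra.
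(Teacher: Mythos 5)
Your proposal is correct and is essentially identical to the paper's proof: the paper likewise writes the outer operation as $R_{ab}+[R_a,R_b]$ (resp.\ $L_{ab}+[L_a,R_b]$ and $L_{ba}-[L_b,R_a]$) acting on the inner triple product, distributes it using Lemma \ref{R_t, L_t and products 1, 2} for the multiplication summand and Lemma \ref{derivations} for the (left) derivation summand, and recombines the three slots exactly as you describe. The only blemish is in your middle slot for \eqref{JTD7}, where you write $-(L_{ab}+[L_a,R_b])d$ for what should be $-(L_{ab}-[L_a,R_b])d$ and simultaneously misquote the operator form $\langle b,a,d\rangle_2=(L_{ab}-[L_a,R_b])d$ as $(L_{ab}+[L_a,R_b])d$; these two sign slips cancel, so the resulting term $-\langle c,\langle b,a,d\rangle_2,e\rangle_2$ and hence your conclusion are still correct.
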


\begin{proof}
For $a, b, c, d, e \in D$ we apply Lemmas \ref{R_t, L_t and products 1, 2} (1) and \ref{derivations} to get
  \begin{align*}
  &
  \langle \langle e, d, c \rangle_1, b, a \rangle_1 
  =  
  \big( R_{ab} + [R_a, R_b] \big) \langle e, d, c \rangle_1 
  \\
  &= 
  \langle ( R_{ab} {+} [R_a, R_b] ) e, d, c \rangle_1 
  - 
  \langle e, ( R_{ba} {+} [R_b, R_a] ) d, c \rangle_1 
  + 
  \langle e, d, ( R_{ab} {+} [R_a, R_b] ) c \rangle_1 
  \\
  &= 
  \langle \langle e, b, a \rangle_1, d, c \rangle_1 
  - 
  \langle e, \langle d, a, b \rangle_1, c \rangle_1 
  + 
  \langle e, d, \langle c, b, a \rangle_1 \rangle_1,
  \end{align*}
which is identity \eqref{JTD5}. 
The proof of identity \eqref{JTD6} is similar, 
replacing the outermost operation $\langle \cdots \rangle_1$ by $\langle \cdots \rangle_2$. 
Finally, Lemmas \ref{R_t, L_t and products 1, 2} (2) and \ref{derivations} give
  \begin{align*}
  &
  \langle a, b, \langle c, d, e \rangle_1 \rangle_1 
  =  \big( L_{ab} + [L_a, R_b] \big) \langle c, d, e \rangle_1 
  \\
  &= 
  \langle ( L_{ab} {+} [L_a, R_b] ) c, d, e \rangle_1 
  - 
  \langle c, ( L_{ab} {-} [L_a, R_b] ) d, e \rangle_2 
  + 
  \langle ( L_{ab} {+} [L_a, R_b] ) e, d, c \rangle_1 
  \\
  &= 
  \langle \langle a, b, c \rangle_1, d, e \rangle_1 
  - 
  \langle c, \langle b, a, d \rangle_2, e \rangle_2 
  + 
  \langle \langle a, b, e \rangle_1, d, c \rangle_1,
  \\
  &
  \langle a,b,\langle c, d, e \rangle_1 \rangle_2 
  =  
  \big( L_{ba}- [L_b, R_a] \big) \langle c, d, e \rangle_1 
  \\
  &= 
  \langle ( L_{ba} {-} [L_b, R_a] )c, d, e \rangle_1 
  - 
  \langle c, ( L_{ba} {+} [L_b, R_a] )d, e \rangle_2 
  + 
  \langle ( L_{ba} {-} [L_b, R_a] )e, d, c \rangle_1 
  \\
  &= 
  \langle \langle a, b, c \rangle_2, d, e \rangle_1 
  - 
  \langle c, \langle b, a, d \rangle_1, e \rangle_2 
  + 
  \langle \langle a, b, e \rangle_2, d, c \rangle_1,
  \end{align*}
which are identities \eqref{JTD7} and \eqref{JTD8}.
The proof is complete.
\end{proof}


\section*{Acknowledgements}

Murray Bremner was supported by a Discovery Grant from NSERC;
he thanks the Centro de Investigaci\'on en Matem\'aticas in Guanajuato (Mexico) 
for its hospitality during his visit in February 2011.
Ra\'ul Felipe was supported by CONACyT grant 106923.
Juana S\'anchez-Ortega was supported by the Spanish MEC and Fondos FEDER jointly through project
MTM2010-15223, and by the Junta de Andaluc\'ia (projects FQM-336 and FQM2467).
She thanks the Department of Mathematics and Statistics at the University of Saskatchewan
(Canada) for its hospitality during her visit from March to June 2011.


\end{document}